\newcommand{\C}{{\mathbb C}}
\newcommand{\Z}{{\mathbb Z}}
\newcommand{\D}{{\mathbb D}}
\newtheorem{theorem}{Theorem}[section]
\newtheorem{lemma}[theorem]{Lemma}
\newtheorem{proposition}[theorem]{Proposition}
\newtheorem{definition}[theorem]{Definition}
\def\cal{\mathcal}
\newcommand{\calf}[0]{{\cal F}}
\newcommand{\call}[0]{{\cal L}}
\newcommand{\calg}[0]{{\cal G}}
\newcommand{\cale}[0]{{\cal E}}
\newcommand{\calk}[0]{{\cal K}}
\newcommand{\calm}[0]{{\cal M}}
\newcommand{\id}[0]{ {\rm id} }
\begin{document}               % plus the \end{document} command at the end.

\title{A Green--Julg isomorphism for inverse semigroups}
\author[B. Burgstaller]{Bernhard Burgstaller}
\address{Doppler Institute for mathematical physics,
Trojanova 13, 12000 Praha, Czech Republic}
\email{bernhardburgstaller@yahoo.de}
%\thanks{This work was supported by Czech MEYS Grant LC06002.}
%\date{}
%\address{Institute}
%\email{bernhardburgstaller@yahoo.de}
%\thanks{This}
\subjclass{19K35, 20M18, 46L55, 46L80}
\keywords{Green--Julg isomorphism, inverse semigroup, crossed product, $K$-theory}

\begin{abstract}
For every finite unital inverse semigroup $S$ and %$S$-equivariant
$S$-$C^*$-algebra $A$
we establish an
isomorphism between
%$S$-equivariant $K$-theory of $A$
%$S$-equivariant $K$-theory
$KK^S(\C,A)$
%of $A$ % and (the $S$-equivariant $K$-theory %$KK^S(\C,A)$
%of an $S$-equivariant $C^*$-algebra $A$)
%and $K$-theory
and
$K(A \rtimes S)$.
% and the
%and the $K$-theory
 %of the crossed product of $A$ by $S$.
%of %the crossed product
%$A \rtimes S$.
%This generalizes the classical Green--Julg isomorphism from finite groups to finite inverse %semigroups.
This extends the classical Green--Julg isomorphism from finite groups to finite inverse semigroups.
%Further there is an isomorphism between $KK^S(A,B)$ and
%$\widehat{KK^S}(A \rtimes E,B \rtimes B)$.
%% for finite, unital inverse semigroups $S$.
\end{abstract}

\maketitle

%\begin{abstract}
%For a unital inverse semigroup $S$ with projections set $E$ we
%establish an isomorphism between $KK^S(A,B)$ and $\widehat{KK^S}(A
%\rtimes E, B \rtimes E)$. This yields an isomorphism between
%$KK^S(\C, A)$ and the $K$-theory of $A \rtimes S$ for finite $S$.
%\end{abstract}
%
%\begin{abstract}
%Let $S$ be a finite, unital inverse semigroup. In this note we
%establish an isomorphism between $KK^S(\C,A)$ and the $K$-theory of
%%the Khoshkam--Skandalis crossed product
%$A \rtimes S$. For general unital $S$, there is an isomorphism
%between $KK^S (A,B)$ and $\widehat{ KK^S}(A \rtimes E, B \rtimes
%E)$, where $E$ are the projections of $S$.
%\end{abstract}

\section{Introduction}

Let $G$ be a compact group and $A$ a $G$-$C^*$-algebra.
The Green--Julg isomorphism by Green and Julg \cite{0461.46044} states that there is an isomorphism between $G$-equivariant $K$-theory
% that is,
$KK^G(\C,A)$ of $A$
and the $K$-theory of the crossed product $A \rtimes G$, that is, one has $KK^G(\C,A) \cong K(A \rtimes G)$.
%This
This isomorphism plays a fundamental role in operator $K$-theory and
has been successfully extended to other categories than compact groups $G$ as well,
% Similar isomorphisms of Green--Julg type were obtained
%for other equivariant $KK$-theories as well,
for example compact groupoids %$G$
by J. L. Tu \cite{0939.19001,0932.19005} and compact quantum groups %$G$
by R. Vergnioux \cite{ThesisRoland,1064.46064}.
%groups as well, for example groupoids and quantum groups.
%All these results are coined Green--Julg isomorphisms.
%
%Usually the left hand side of the Green--Julg isomorphism is easier to compute then the right hand side,
%
%Usually, the equivariant $K$-theory is easier to compute than the $K$-theory of the crossed product,
%and so the Green--Julg isomorphism provides a useful mean to compute the latter groups.
%to compute %$K(A \times G)$.
%the $K$-theory of the crossed product $A \rtimes G$.

In this note we extend the Green--Julg isomorphism to the class of unital finite inverse semigroups $S$
and the universal crossed product by Khoshkam and Skandalis \cite{1061.46047}.
Formally, it look like the classical isomorphism, that is, we have $KK^S(\C,A) \cong K(A \rtimes S)$, see Theorem \ref{corollaryGreen}.
The proof is done as follows.
We have proven in \cite{burgiKKrdiscrete} that there exists a Baum--Connes map
for a certain class of inverse semigroups, including finite inverse semigroups,
by translating inverse semigroup equivariant $KK$-theory to groupoid equivariant $KK$-theory
and then applying
%together with an application of
% and
%then applying
the Baum--Connes map for groupoids
by Tu \cite{0932.19005}.
Since $S$ is finite, this Baum--Connes map is an isomorphism
$\widehat{KK^S}(\C \rtimes E, A \rtimes E) \cong K(A \rtimes S)$, where $E$ denotes the set of idempotent elements of $S$,
and $\widehat{KK^S}$ compatible $S$-equivariant $KK$-theory \cite{burgiKKrdiscrete}.
Our main work in this note is to establish an isomorphism $\delta^S: KK^S(A,B) \rightarrow \widehat{KK^S}(A \rtimes E,B \rtimes E)$
between %ordinary
$S$-equivariant $KK$-theory \cite{burgiSemimultiKK} and compatible $S$-equivariant $KK$-theory \cite{burgiKKrdiscrete}
in Theorem \ref{theoremdelta},
from which the announced Green--Julg isomorphism follows in Theorem \ref{corollaryGreen}. % from the said.

This note is organized as follows. In Section \ref{section2} we recall some basic definitions of $S$-equivariant $KK$-theory.
In Section \ref{section3} we define the compatible internal tensor product of $S$-equivariant Hilbert bimodules.
In Section \ref{section4}, Proposition \ref{propositionequivalenceCD}, we show
a certain equivalence of categories between compatible and incompatible $S$-equivariant Hilbert bimodules.
In Section \ref{section5}, we use this to prove that $\delta^S$ is an isomorphism, see Theorem \ref{theoremdelta},
and deduce the Green--Julg isomorphism for inverse semigroups in Theorem \ref{corollaryGreen}.

%$\lim_{Y \subseteq \underline E \calg_S}\widehat{KK^S}(C_0(Y),A) \longrightarrow K( A \widehat \rtimes S)$
%
%%A practically usefullnes of the Green--Julg isomorphism lies in the fact that the $K$-homology is experiencially easier to compute than
%%the $K$-theory of the crossed product.
%
%In this note we prove a Green--Julg isomorphism for finite unital inverse semigroups $S$. It looks
%like the classical Green--Julg isomorphism: one has $KK^S(\C,A) \cong K(A \rtimes S)$ for a $S$-$C^*$-algebra $A$
%and the full crossed product $A \rtimes S$ in the sense of Khoshkam and Skandalis.
%For a finite group $S$ this is the original Green--Julg isomorphism.
%
%All what we do in this note is to establish an isomorphism between $KK^S(A,B)$ and $\widehat{KK^S}(A \rtimes E,B \rtimes B)$.

\section{$S$-equivariant $KK$-theory}
\label{section2}

Let $S$ be a unital inverse semigroup and $E$ its subset of idempotent elements.
%The unit of $S$ is denoted by $1$.
We recall here some basic definitions about $C^*$-algebras and $KK$-theory in the $S$-equivariant case,
see \cite{burgiSemimultiKK} or \cite{burgiDescent}. % for a summary of these definitions.
Because $S$ is an inverse semigroup, these definitions become slightly compacter than in \cite{burgiDescent}.
%we take this into account
%in the definitions stated below.
%Additionally,
We shall always assume that the unit $1 \in S$ acts as the identity on the respective category.
%\begin{definition}

\begin{definition}   \label{defCstar}
{\rm
An {\em $S$-Hilbert $C^*$-algebra} is a $\Z/2$-graded $C^*$-algebra $A$ with a
unital semigroup homomorphism
$\alpha: S \rightarrow \mbox{End}(A)$ such that $\alpha_s$ respects the grading
and $\alpha_{s s^{*}}(x) y = x \alpha_{s s^{*}}(y)$
for all $x,y \in A$ and $s \in S$.
%and $\alpha_s$ respects the grading
%for all $g \in G$.
}
\end{definition}

%\begin{definition}
%{\rm
%An {\em {$S$-Hilbert $C^*$-algebra $A$}} is a $C^*$-algebra which is regarded as a Hilbert module over itself by
%the $A$-valued inner product $\langle a, b\rangle = a^* b$
%and which is equipped with a map $\alpha:S \rightarrow {\rm End}(A)$
%into the endomorphims of $A$
%such that the identites $\alpha_1= \id_A$, $\alpha_{st} = \alpha_s \alpha_t$ and $\langle \alpha_s(a), b \rangle = \alpha_s(\langle a, \alpha_{s^*}( %b)\rangle )$ hold for all $s,t \in S$ and $a,b \in A$,
%and the operator $\alpha_{s s^*}$ is a self-adjoint projection in $\call(A)$ for all $s \in S$. It is assumed that $A$ is
%$\Z/2$-graded and $\alpha_s$ respects the $\Z/2$-grading of $A$.
%}
%\end{definition}

%It is easy to check that $\alpha_e$ is in the center of $\call(A)$ (adjoint-able operators) for every $e \in E$.
We usually write $s(a):=\alpha_s(a)$ for the action of $S$ on $A$.
% for the $S$-action $\alpha$ on an $S$-Hilbert $C^*$-algebra.
%One can show that the action of $S$ on $A$ is {\em compatible} with respect to idempotent elements of $S$, that is,
%one has
%\begin{equation}   \label{compatibleproducthilbertcstar}
%\alpha_e(ab)= \alpha_e(a) \alpha_e(b) = \alpha_e(a) b = a \alpha_e(b)
%\end{equation}
%for all $e \in E$ and $a,b \in A$, because $\alpha_e$ is a projection in the center of the multiplier algebra of $A$,
%see \cite{burgiSemimultiKK} or \cite[Lemma 5.8]{burgiDescent}.
%
A $*$-homomorphism $f:A \rightarrow B$ between $S$-Hilbert $C^*$-algebras $A$ and $B$ is called {\em $S$-equivariant}
%intertwines the $S$-actions, that is,
if $f(s(a))= s(f(a))$ for all $a \in A$ and $s \in S$.
%An {\em $S$-equivariant homomorphism $f$} between $S$-Hilbert $C^*$-algebras $A$ and $B$ is a $*$-homomorphism
%such that
%We denote by $\mathsf{C}$
We regard the class of $S$-Hilbert $C^*$-algebras as a category where the morphisms are the $S$-equivariant $*$-homomorphisms.

\begin{definition}   \label{defSHilbertModule}
{\rm
Let $B$ be an $S$-Hilbert $C^*$-algebra. An {\em $S$-Hilbert $B$-module $\cale$} is a $\Z/2$-graded Hilbert $B$-module
which is equipped with a unital semigroup homomorphism $U: S \rightarrow {\rm Lin}(\cale)$ (linear maps on $\cale$) such that
$U_s$ respects the grading, $U_{s s^*}$ is a self-adjoint projection in $\call(\cale)$, and the identities
%$U_1=\id_\cale$, $U_{st} = U_s U_t$,
$\langle U_s(\xi),\eta \rangle = s(\langle \xi, U_{s^*} (\eta) \rangle )$ and
$U_s(\xi b) = U_s(\xi) s(b)$ hold for all $s \in S$ and $\xi,\eta \in \cale$.
%, and
%$U_{s s^*}$ is a self-adjoint projection in $\call(\cale)$ for all $s \in S$.
}
\end{definition}

%The reader may be warned

\begin{definition}   \label{SequivariantRep}
{\rm
Let $A$ and $B$ be $S$-Hilbert $C^*$-algebras.
An {\em $S$-Hilbert $A,B$-bimodule} $\cale$ is an $S$-Hilbert $B$-module $\cale$ with a $*$-homomorphism
$\pi:A \rightarrow \call(\cale)$, which is an {\em $S$-equivariant representation} of $A$ on $\cale$
in the sense that
%such that
$U_s \pi(a) U_{s^*} = \pi(s(a)) U_{s} U_{s^*}$ and $[U_s U_{s^*},\pi(a)]=0$
(commutator) for all $a \in A$ and $s \in S$.
% $\pi$ is called an {\em $S$-equivariant representation}.
}
\end{definition}

We often write $a \xi$ rather than $\pi(a) \xi$.
We remark that $A$ and $B$ act usually {\em incompatibly} on $\cale$ in the sense that $U_e(\xi) b \neq \xi U_e(b)$ for $e \in E$ and $b \in B$,
and similarly so on the $A$-side.
%
%%Also the $A$-module structure is usually {\em incompatible} in the sense that $e(a) \xi \neq a U_e(\xi)$
%for $e \in E$. % for some $e \in E,a \in A$ and $\xi \in \cale$.
We shall consider compatible versions of Hilbert bimodules in the following sense.

\begin{definition}
{\rm
We call an $S$-Hilbert $A,B$-bimodule $\cale$ {\em compatible} if $e(a) \xi = a U_e(\xi)$ and $U_e(\xi) b = \xi e(b)$ for all $e \in E, \xi \in \cale, a \in A$ and $b \in B$.
}
\end{definition}

A {\em morphism $\mu:\cale \rightarrow \calf$} between $S$-Hilbert $A,B$-bimodules $\cale$ and $\calf$ is
understood to strictly respect all involved structures
on both sides
% of an $S$-Hilbert $A,B$-bimodule
(including the $B$-valued inner product).
% an adjoint-able operator $\mu \in \call(\cale,\calf)$
%which respects the $A$ (and automatically $B$) module multiplication and the $S$-action in the sense that it commutes with $\mu$.
%
We view the class of $S$-Hilbert $A,B$-bimodules together with these morphisms as a category. % (denoted by $\mathsf{C}$ later).
%The class of compatible $S$-Hilbert $A,B$-bimodules
It forms a subclass of the class of all (sometimes called {\em incompatible})
$S$-Hilbert $A,B$-bimodules.

%A {\em morphism $\mu$} between $S$-Hilbert $A,B$-bimodules $\cale$ and $\calf$ is
%a map $\cale \rightarrow \calf$ which strictly respects all structures of an $S$-Hilbert $A,B$-bimodule
%(including the $B$-valued inner product).
%The class of $S$-Hilbert $A,B$-bimodules together with their morphisms is regarded as a category.
%The class of compatible $S$-Hilbert $A,B$-bimodules is regarded as a subclass of the class of all (sometimes called incompatible)
%$S$-Hilbert $A,B$-bimodules.

% an adjoint-able operator $\mu \in \call(\cale,\calf)$
%which respects the $A$ (and automatically $B$) module multiplication and the $S$-action in the sense that it commutes with $\mu$.

%((hier crossed products einfuegen))

\begin{definition}   \label{defKK}
{\rm
An {\em $S$-equivariant $A,B$-cycle} $(\cale,T)$ consists of an $S$-Hilbert $A,B$-bimodule $\cale$ and an operator $T \in \call(\cale)$ such that
$(\cale,T)$ is a non-equivariant cycle in the sense of Kasparov (\cite{kasparov1981,kasparov1988})
and both $[U_{s s^*},T]$ and $U_s T U_{s^*} - U_{s s^*} T$
are in $\{ S \in \call(\cale)|\, a S , S a \in \calk(\cale)\}$
for all $s \in S$.
%Parallel to Kasparov's definition,
$KK^S(A,B)$ is defined to be the class of $S$-equivariant $A,B$-cycles divided by homotopy induced by $S$-equivariant $A,B[0,1]$-cycles.
}
\end{definition}

%
%(Notice that we work here with the unital version of equivariant $KK$-theory, see \cite[Section 9]{burgiDescent},
%because $1 \in S$ acts always as a unit.)
A cycle is called {\em compatible} if the underlying $S$-Hilbert $A,B$-bimodule is compatible.
$\widehat{KK^S}(A,B)$ is defined to be the set of compatible cycles divided by homotopy (induced by compatible cycles),
see also \cite[Section 3]{burgiDescent}.
%
%We denote %the class of ordinary Kasparov cycles by $\E(A,B)$ and
%The class of incompatible and compatible $S$-equivariant Kasparov cycles is denoted by
%$\E^S(A,B)$ and $\widehat{\E^S}(A,B)$, respectively.
%
%\begin{definition}[\cite{1061.46047} or {\cite[Section 6]{burgiDescent}}]
%{\rm
The {\em full crossed product} $A \rtimes S$ of an $S$-Hilbert $C^*$-algebra $A$ (see \cite{1061.46047})
is the enveloping $C^*$-algebra of
% all $S$-covariant representations (representations on Hilbert space in the sense of Definition \ref{SequivariantRep}) of
the involutive Banach algebra
$\ell^1(S,A):= \{ a: S \rightarrow A| \, a_s \in A_{s s^*} := s s^*(A), \, \sum_{s \in S} \|a_s\| < \infty \}$ under convolution
$(\sum_{s \in S} a_s \rtimes s)(\sum_{t \in S} b_t \rtimes t) := \sum_{s,t \in S} a_s s(b_t) \rtimes st$ and
involution $(\sum_{s \in S} a_s \rtimes s)^* := \sum_{s \in S} s^*(a_s^*) \rtimes s^*$
(standard elements of $A \rtimes S$ are denoted by $a \rtimes s$).
%
%% where $a_s \in A_{s s^*} := s s^*(A)$ and $b_t \in A_{t t^*}$,
%with respect to representations induced by equivariant representations of $A$ in the sense of Definition
%\ref{SequivariantRep} on Hilbert space.
%% are allowed.
%}
%\end{definition}
%
%There is an $S$-equivariant representation $A \rightarrow \call(A \rtimes S)$ (Definition \ref{SequivariantRep})
%given by multiplication, $a (b \rtimes s) := ab \rtimes s$ for $a,b \in A$ and $s \in S$.
%There is an alternative crossed product by Sieben
Sieben's crossed product \cite{sieben1997} is denoted by $A \widehat \rtimes S$.
%,
%is the enveloping $C^*$-algebra of $\ell^1(S,A)$ with respect to representations induced by
%{\em strictly} covariant representations
%%, which is defined
%%similarly like the full crossed product but where only {\em strictly} $S$-equivariant representations
%(that is, covariant representations satisfying $U_s \pi(a) U_s^* = \pi(s(a))$) on Hilbert space.
%% are allowed.

\section{The compatible internal tensor product}
\label{section3}

%Let $E$ denote the set of idempotent elements of $S$.
For the rest of the paper we assume that $E$ is a finite set.
The commutative $C^*$-algebra $C^*(E)$ freely generated by the set $E$ of commuting projections (see \cite{1061.46047})
may be identified with the full crossed product $\C \rtimes E$,
and with $C_0(X)$, where $X$ denotes the (finite) spectrum of $C^*(E)$. The canoncial generators of $C^*(E)$
are denoted by $u_e \in C^*(E)$ ($e \in E$).
By universality, $C^*(E)$ induces a $*$-homomorphism $C_0(X) \rightarrow Z(\calm(A)): u_e \mapsto \alpha_e$ (center of the multiplier algebra)
for every $S$-Hilbert $C^*$-algebra $(A,\alpha)$.
Similarly we have a $*$-homomorphism $C_0(X) \rightarrow \call(\cale): u_e \mapsto U_e$ for every $S$-Hilbert module $\cale$.

%Note that we have
%%\begin{equation}   \label{multiErtimesS}
%$(a \rtimes e)(b \rtimes f) = ab \rtimes ef$
%%\end{equation}
%and $(a \rtimes e)^* = a^* \rtimes e$
%in $A \rtimes E$ for $e,f \in E, a \in A_e$ and $b \in A_f$. % by identities (\ref{compatibleproducthilbertcstar}).
%We shall identify $C^*(E)$ with $C_0(X)$, where $X$ denotes the spectrum of $C^*(E)$.
%% for some finite discrete set $X$.
%% by Gelfand's
%%representation theorem.
%%The canonical generators of $C^*(E)$ corresponding to $e \in E$ are denoted by $u_e$. % ($e \in E$).
%We write $u_e \in C^*(E)$ for the canonical generators ($e \in E$).
%
%By universality, $C^*(E)$ induces a
%% on $S$-Hilbert $C^*$-algebras and modules by setting
%%
%%That is, every $S$-Hilbert $C^*$-algebra is automatically a $C_0(X)$-$C^*$-algebra, and every compatible $S$-Hilbert bimodule
%%is automatically a $C_0(X)$-Hilbert bimodule. It is a $C_0(X)$-structure
%$C_0(X)$ structure on every $S$-Hilbert $C^*$-algebra
%in Kasparov's sense \cite{kasparov1988},
%that is, one has a homomorphism $C_0(X) \rightarrow Z (\calm(A))$
%% is $S$-equivariant; but this fact will not be used here)
%defined by
%$u_e(a) := e(a)$ for all $e \in E$ and $a \in A$. Similarly we get a $C_0(X)$-structure on Hilbert modules by putting $u_e(\xi) := U_e(\xi)$.

Every minimal projection $P$ in $C^*(E)$ corresponds to an element $x \in X$ such that $P=1_{\{x\}}$ in $C_0(X)$,
and thus we loosely write $P \in X$ for a minimal projection $P$.
%
%That is why we
%identify the set of minimal projections of $C^*(E)$ with $X$, and loosely write $P \in X$.
It may be written as
\begin{equation} \label{expressionP}
P = u_{e_1} \ldots u_{e_n} (1- u_{f_1}) \ldots (1-u_{f_m}),
\end{equation}
where $E=\{e_1,\ldots,e_n\} \sqcup \{f_1, \ldots,f_m\}$ is some partition of $E$ into two parts ($n \ge 1, m \ge 0$).
We also have $P=u_e (1- u_{e f_1}) \ldots (1-u_{e f_m})$, where $e=e_1 \ldots e_n$. In this way
we see that every $P$ can be written in so-called {\em standard form}
$P= u_e \prod_{f \in E, f < e} (1-u_f)$ with $e \in E$, and
vice versa, every expression in standard form is an element of $X$.
Define $E_e:= e E$ for $e \in E$.
For all $s \in S$ there is an order preserving isomorphism $\gamma_s: E_{s^* s} \rightarrow E_{s s^*}$ (with inverse $\gamma_{s^*}$)
defined by $\gamma_s(e) = s e s^*$.

For general considerations, we enlarge the set of letters $u_e$ ($e \in E$) by considering also formal letters
$u_s$ for every $s \in S$. %This $u$ has to be understood as the freest $S$-action possible;
In practical terms we mean by $u$
a formal $S$-action which is not specified, and which has to be
% occasionally let $u$ denote a general (formal or universal) $S$-action which is not specified.
replaced by the concrete $S$-action when applied to concrete Hilbert $C^*$-algebras and modules.
Note that we have $u_s P = P_s u_s$
for $s \in S$, where
\begin{equation} \label{Ps}
P_s :=  u_s P u_{s^*} = u_{s e_1 s^*} \ldots u_{s e_n s^*} (1 -
u_{s f_1 s^*}) \ldots (1- u_{s f_m s^*}).
\end{equation}

Let $\cale$ and $\calf$ be incompatible $S$-Hilbert bimodules with $S$-actions $U$ and $V$, respectively.
Define the self-adjoint diagonal projection
%\begin{equation}   \label{projectionD}
$\D := \sum_{P \in X} P \otimes P$
%\end{equation}
on the internal tensor product $\cale \otimes_B \calf$.
(Recall from \cite{burgiSemimultiKK} that $U_e \otimes 1$ and $1 \otimes V_e$ are well-defined self-adjoint projections on $\cale \otimes_B \calf$.)
%by \cite{burgiSemimultiKK}, and so $\D$ is evidently a self-adjoint projection.

%We want to make the internal tensor product $\cale \otimes_B \calf$ compatible in the sense that $U_e(\xi) \otimes \eta = \xi \otimes V_e(\eta)$.
%We may consider the quotient of $\cale \otimes_B \calf$ by the linear span of elements of the form
%$U_e(\xi) \otimes \eta - \xi \otimes V_e(\eta)$, but it is not a priori clear how to define the inner product.
%However, writing in this quotient
%$$\xi \otimes \eta = \sum_{P \in X} P(\xi) \otimes \sum_{Q \in X} Q(\eta) = \sum_{P \in X} P(\xi) \otimes P(\eta),$$
%we may linearly identify this quotient with $\D(\cale \otimes_B \calf)$, where
%$\D$ denotes the diagonal projection
%\begin{equation}   \label{projectionD}
%\D := \sum_{P \in X} P \otimes P
%\end{equation}
%in $C^*(E) \otimes C^*(E)$ (or, as we prefer to see it, we simply regard $\D$ as a formal expression).
%Recall that $U_e \otimes 1$ and $1 \otimes V_e$ are well defined self-adjoint projections on $\cale \otimes_B \calf$
%by \cite{burgiSemimultiKK}, and so $\D$ is evidently a self-adjoint projection.
%%
%% becomes %is
%%really a well defined projection %on $\cale \otimes_B \calf$.
%%when applied on $\cale \otimes_B \calf$.

\begin{lemma}  \label{lemmaDcommutes}
$\D$ commutes with $u_s \otimes u_s$.
\end{lemma}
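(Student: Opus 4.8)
The plan is to reduce the assertion to surjectivity of the conjugation homomorphism $Q\mapsto u_{s^*}Qu_s$ of $C^*(E)$ onto the corner $u_{s^*s}C^*(E)$, a fact that follows at once from the order isomorphisms $\gamma_s\colon E_{s^*s}\to E_{ss^*}$ of the excerpt. Throughout I work in the formal $u$-calculus: there are two commuting copies of it acting on the two tensor factors of $\cale\otimes_B\calf$, so that $P\otimes P$ and $u_s\otimes u_s$ make literal sense, and inside each copy I use the relations $u_su_{s^*}=u_{ss^*}$, $u_{ss^*}u_s=u_s$, $u_su_{s^*s}=u_s$, $u_sP=P_su_s$ (equation \eqref{Ps}), and the mutual commutativity of the $u_e$.

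First I would note that $Pu_s=u_s(u_{s^*}Pu_s)$ for every $P\in X$, since $u_s(u_{s^*}Pu_s)=u_{ss^*}Pu_s=Pu_{ss^*}u_s=Pu_s$. Abbreviating $\psi(Q):=u_{s^*}Qu_s$ and summing tensor squares over $X$, this yields
\[
\D\,(u_s\otimes u_s)=\sum_{P\in X}Pu_s\otimes Pu_s=(u_s\otimes u_s)\Bigl(\sum_{P\in X}\psi(P)\otimes\psi(P)\Bigr),
\]
so it remains to prove $\sum_{P\in X}\psi(P)\otimes\psi(P)=(u_{s^*s}\otimes u_{s^*s})\,\D$ and then to invoke $u_su_{s^*s}=u_s$.

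To see this I would check that $\psi\colon C^*(E)\to u_{s^*s}C^*(E)$ is a $*$-homomorphism, unital for the unit $u_{s^*s}$ of the corner (indeed $\psi(1)=u_{s^*}u_s=u_{s^*s}$, and $\psi(Q)\psi(R)=u_{s^*}Q\,u_{ss^*}R\,u_s=u_{s^*}QRu_s$ by commutativity and $u_{ss^*}u_s=u_s$), and, crucially, that it is \emph{surjective}: for $f\in E$ one has $\psi(u_f)=u_{s^*fs}=u_{\gamma_{s^*}(f\,ss^*)}$, and since $f\mapsto f\,ss^*$ maps $E$ onto $E_{ss^*}$ while $\gamma_{s^*}\colon E_{ss^*}\to E_{s^*s}$ is a bijection, the elements $\psi(u_f)$ exhaust $\{u_h:h\in E_{s^*s}\}$, which generates the corner $u_{s^*s}C^*(E)$. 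Hence $\psi$, a surjective $*$-homomorphism between finite-dimensional commutative algebras, is dual to an injection $\iota$ of $X_{s^*s}:=\{x\in X:u_{s^*s}(x)=1\}$ into $X$, so that $\psi(1_{\{y\}})$ equals the minimal projection $1_{\iota^{-1}(\{y\})}$ of $u_{s^*s}C^*(E)$ when $y\in\iota(X_{s^*s})$ and $0$ otherwise. Consequently $\sum_{P\in X}\psi(P)\otimes\psi(P)$ is the characteristic function of $\{(x_1,x_2)\in X_{s^*s}\times X_{s^*s}:\iota(x_1)=\iota(x_2)\}$, i.e. of the diagonal of $X_{s^*s}$, and that is precisely $(u_{s^*s}\otimes u_{s^*s})\,\D$.

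Combining this with the display above and with $(u_s\otimes u_s)(u_{s^*s}\otimes u_{s^*s})=u_s\otimes u_s$ then gives $\D\,(u_s\otimes u_s)=(u_s\otimes u_s)\,\D$. I expect the only substantive point to be the surjectivity of $\psi$: one must see through the ``clipping'' $f\mapsto f\,ss^*$ in order to recognise $u_{s^*s}C^*(E)$ as a copy of $C^*(E_{s^*s})$ onto which conjugation by $u_s$ maps $C^*(E)$ via $\gamma_{s^*}$. Everything else is routine bookkeeping with the minimal projections $P\in X$, and the only formal subtlety — that $u_s\otimes u_s$ need not be globally defined on $\cale\otimes_B\calf$ — is harmless, all the equalities above being identities in the formal $u$-calculus (equivalently, on the domain of $u_s\otimes u_s$), while $\sum_{P\in X}\psi(P)\otimes\psi(P)=(u_{s^*s}\otimes u_{s^*s})\,\D$ is a genuine operator identity in the image of $C^*(E)\otimes C^*(E)$.
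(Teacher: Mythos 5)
Your argument is correct, but it reaches the key combinatorial fact by a different route than the paper. The paper works directly with the standard form $P=u_e\prod_{f<e}(1-u_f)$: it computes $u_sP=P_su_s$ and observes that, because $\gamma_s$ is an order isomorphism, $P_s$ is again a minimal projection in standard form, so that $P\mapsto P_s$ is a bijection $Y_{s^*s}\to Y_{ss^*}$; summing over $Y_{s^*s}$ then gives $(u_s\otimes u_s)\D=\D(u_s\otimes u_s)$. You instead move $u_s$ to the left, writing $Pu_s=u_s\psi(P)$ with $\psi(Q)=u_{s^*}Qu_s$, and justify the resummation $\sum_P\psi(P)\otimes\psi(P)=(u_{s^*s}\otimes u_{s^*s})\D$ structurally: $\psi$ is a surjective $*$-homomorphism of $C^*(E)$ onto the corner $u_{s^*s}C^*(E)$ (surjectivity on generators coming from $\gamma_{s^*}$, exactly where the paper uses $\gamma_s$), and Gelfand duality for finite-dimensional commutative $C^*$-algebras then forces $\psi$ to send minimal projections to minimal projections of the corner or to zero, injectively on the nonvanishing ones. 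Both proofs thus rest on the same semilattice isomorphism, but yours trades the paper's explicit standard-form bookkeeping for a duality argument; what the paper's computation buys in exchange is that it directly yields Lemma \ref{lemmaPs} (that $P_s$ is again in standard form, and the bijection $Y_{s^*s}\to Y_{ss^*}$), which is reused later, e.g.\ in the proof of Lemma \ref{lemmafunctorG}, whereas your version would need to extract those statements separately (they do follow from your duality picture, since the minimal projections of the corner are exactly the $P\in X$ with $P\le u_{s^*s}$). Your closing caveat about $u_s\otimes u_s$ is harmless: the diagonal action is in fact globally defined on $\cale\otimes_B\calf$, as used throughout the paper.
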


\begin{proof}
Write $P= u_e \prod_{f < e}(1-u_f)$ in standard form. Let $s \in S$. Since $P$ is a minimal projection,
either $u_s P = u_s u_{s^* s} P=0$ or $P \le u_{s^* s}$.
In the latter case we have $e \in E_{s^* s}$ and
$$u_s P = P_s u_s = u_{s e s^*} \prod_{f < e} (1- u_{s f s^*}) u_s
= u_{\gamma_s(e)} \prod_{f < e}(1-u_{\gamma_s(f)}) u_s.$$
We see here that $P_s$ is again in standard form as $\gamma_s$ is an order isomorphism.
%Since $P= u_s^* u_s P u_s^* u_s = (P_s)_{s^*}$ for $P \le u_{s^* s}$,
% $P \mapsto P_s$ and $P_s \mapsto u_{s^*} P_s u_s = P$ are inverses to each other.
%More precisely,
Setting $Y_g = \{P \in X| \, P \le u_g\}$,
the map $P \mapsto P_s$ defines a bijection $Y_{s^* s } \rightarrow Y_{s s^*}$ (with inverse map $P \mapsto P_{s^*}$).
Consequently,
\begin{eqnarray*}
(u_s \otimes u_s) \D &=& \sum_{P \in Y_{s^* s}} u_s P \otimes u_s P = \sum_{P \in Y_{s^* s}} P_s u_s \otimes P_s u_s
%= \sum_{P \in Y_{s s^*}} P u_s \otimes P u_s \\
%&=&
= \D (u_s \otimes u_s).
\end{eqnarray*}
\end{proof}

%By similar arguments as in
%By inspecting the last proof we get the following observation.
From the arguments in the last proof we also get the following corollary.

\begin{lemma} \label{lemmaPs}
(i) For every $P \in X$ and $s \in S$ one has
$P \le u_{s^* s}$ iff $P_s \neq 0$ iff $P_s \in X$. %Moreover,

(ii) If $P$ is in standard form in (\ref{expressionP}) then $P_s$ is also in
standard form in (\ref{Ps}).

(iii) Denoting $Y_e = \{P \in X| \, P \le u_e\}$ for $e \in E$,
the map $P \mapsto P_s$ defines a bijection $Y_{s^* s } \rightarrow Y_{s s^*}$.

%Otherwise, $P_s =0$.
%
%If $P \le u_{s^* s}$ then $P_s \in X$ (and if $P$ is in standard form then $P_s$ also in (\ref{Ps})).
%Otherwise, $P_s =0$.
\end{lemma}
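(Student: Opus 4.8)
The plan is to read all three assertions off the computation already performed in the proof of Lemma~\ref{lemmaDcommutes}; each is just bookkeeping about the assignment $P \mapsto P_s = u_s P u_{s^*}$ and the order isomorphism $\gamma_s \colon E_{s^*s} \to E_{ss^*}$, $\gamma_s(e) = ses^*$, whose inverse is $\gamma_{s^*}$.

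I would treat (i) and (ii) together. Fix $s \in S$ and $P \in X$ in standard form $P = u_e \prod_{f < e}(1 - u_f)$. Since $P$ is a minimal projection of $C^*(E)$ and $u_{s^*s}$ is a projection there, either $P u_{s^*s} = 0$ or $P \le u_{s^*s}$. In the first case $u_s P = u_s u_{s^*s} P = 0$ (using $s(s^*s) = s$, hence $u_s u_{s^*s} = u_s$), so that $P_s = u_s P u_{s^*} = 0 \notin X$. In the second case $e \in E_{s^*s}$, and pushing $u_s$ rightward through the factors of $P$ by means of $u_s u_g = u_{sgs^*} u_s$ and $u_s(1 - u_g) = (1 - u_{sgs^*}) u_s$ yields precisely the expression (\ref{Ps}), i.e. $P_s = u_{\gamma_s(e)} \prod_{f < e}(1 - u_{\gamma_s(f)})$. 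Because $f < e \le s^*s$ forces $f \in E_{s^*s}$ and $\gamma_s$ is an order isomorphism onto $E_{ss^*}$, the set $\{f \in E : f < e\}$ is mapped bijectively onto $\{g \in E : g < \gamma_s(e)\}$, so $P_s$ is again in standard form: this is (ii), and it shows in particular $P_s \in X$, hence $P_s \neq 0$. Combining the two cases gives the chain of equivalences in (i).

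For (iii) I would check that $P \mapsto P_s$ maps $Y_{s^*s}$ into $Y_{ss^*}$ with two-sided inverse $P \mapsto P_{s^*}$ — this is exactly the bijection already exhibited in the proof of Lemma~\ref{lemmaDcommutes}. If $P \le u_{s^*s}$, then $P_s \in X$ by (i), and $u_{ss^*} P_s = u_{ss^*} u_s P u_{s^*} = u_s P u_{s^*} = P_s$ using $(ss^*)s = s$, so $P_s \in Y_{ss^*}$. Moreover $(P_s)_{s^*} = u_{s^*} u_s P u_{s^*} u_s = u_{s^*s} P u_{s^*s} = P$, since $P \le u_{s^*s}$ makes left and right multiplication by $u_{s^*s}$ act as the identity on $P$; symmetrically $(Q_{s^*})_s = Q$ for $Q \le u_{ss^*}$. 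Hence $P \mapsto P_s$ is the asserted bijection $Y_{s^*s} \to Y_{ss^*}$.

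I do not expect a genuine obstacle here. The only point deserving care is in (ii): one must ensure that after applying $\gamma_s$ the product is indexed by \emph{all} of $\{g \in E : g < \gamma_s(e)\}$ and not merely a subset, which is exactly why it matters that $\gamma_s$ is defined on the whole of $E_{s^*s}$ and is an order isomorphism with order-preserving inverse. Everything else reduces to routine identities for idempotents in $S$.
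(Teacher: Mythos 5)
Your argument is correct and follows essentially the same route as the paper, which derives this lemma directly from the computation in the proof of Lemma \ref{lemmaDcommutes}: the minimality dichotomy $u_sP=0$ versus $P\le u_{s^*s}$, conjugating the standard form through $\gamma_s$, and the inverse map $P\mapsto P_{s^*}$ for the bijection $Y_{s^*s}\to Y_{ss^*}$. The point you flag in (ii) — that $\gamma_s$ maps $\{f<e\}$ \emph{onto} $\{g<\gamma_s(e)\}$ — is exactly what the paper compresses into the phrase ``as $\gamma_s$ is an order isomorphism,'' so your write-up is, if anything, slightly more explicit.
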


\begin{definition}   \label{definitionCompatibleTensor}
{\rm
Let $\cale$ an $S$-Hilbert $A,B$-bimodule and $\calf$ an $S$-Hilbert $B,C$-bimodule.
The {\em compatible} internal tensor product $\cale \otimes_B^{X} \calf$ is the sub-$S$-Hilbert
$A,C$-bimodule $\D(\cale \otimes_B \calf)$ of $\cale \otimes_B \calf$.
}
\end{definition}

By Lemma \ref{lemmaDcommutes}, $\cale \otimes_B^{X} \calf$ is invariant under the $S$-action.
%,
%and invariant under the $A,C$-strucure (as $[\pi(a),U_e]=0$).
%Note that $\cale \otimes_B \calf$ is an $S$-Hilbert $A,C$-bimodule and by Lemma \ref{lemmaDcommutes} the submodule
%$\cale \otimes_B^{C_0(X)} \calf$ is invariant under the $S$-action.
%Since $\D$ is self-adjoint it commutes with the module
%multiplication by $C$, and since $[\pi(a),U_e]=0$, the action of $A$ on $\cale \otimes_B \calf$ commutes with $\D$, and so
%Definition \ref{definitionCompatibleTensor} is well-defined.
The tensor product $\cale \otimes_B^{X} \calf$ is now compatible, that is,
%\begin{eqnarray*}
%\D(u_e(\xi) \otimes \eta)  &=&  \sum_{P \in X,P u_e \neq 0} P u_e(\xi) \otimes P(\eta)\\
%&=& \sum_{P \in X, P u_e \neq 0} P(\xi) \otimes P u_e(\eta)
%$
$\D(u_e(\xi) \otimes \eta) =\D(\xi \otimes u_e(\eta))$ %$
for every $e \in E$.
%\end{eqnarray*}
%since $P$ is minimal and so either $u_e P=0$ or $u_e P = P$ for every $P$ in $X$.
% since $P$ is minimal (or see (\ref{expressionP})).
%
If the module multiplication between $\cale$ and $B$ is
compatible then it is also compatible with respect to $P$, that is, $P(\xi) b = \xi P(b) = P(\xi b) = P(\xi)
P(b)$ (by induction with expression (\ref{expressionP})).
% Consequently, if $\cale$ and $\calf$ are compatible
%with respect to $B$
%then $\cale \otimes_B \calf$ and $\cale \otimes_B^{X} \calf$ coincide because then
%$\D= 1$.
%\sum P \otimes P = \sum P \otimes 1 = 1$.

%For a morphism $\mu: \cale \rightarrow \calf$ between $S$-Hilbert bimodules we have $\mu u_e = u_e \mu$ for all $e \in E$ and thus $[\D,\mu \otimes %1]=0$.
%Hence $\mu \otimes 1$ is invariant under the compatible tensor product and we simply write $\mu \otimes 1$ for $\D(\mu \otimes 1)$
%on the compatible tensor product.

\section{A categorial equivalence}
\label{section4}

%Given an $S$-Hilbert $C^*$-algebra $A$ we endow its crossed product $A \rtimes E$ with the action $\beta_s(a \times e) = s(a) \rtimes s e s^*$
%under which it becomes an $S$-Hilbert $C^*$-algebra again.

%\begin{definition}
%
Given an $S$-Hilbert $C^*$-algebra $A$, we regard the crossed product $A \rtimes E$ as a $S$-Hilbert $C^*$-algebra
under the $S$-action $\beta_s(a \rtimes e) = s(a) \rtimes s e s^*$, see \cite{1061.46047}.
%}
%\end{definition}
% (Typical standard elements will be denoted by $a \rtimes e \in A \rtimes E$.)
We let $A$ act on $A \rtimes E$ by multiplication,
which %, that is, we mean $a b \rtimes e$ when writing $a (b \rtimes e)$  ($a,b \in A$ and $e \in E$).
is an $S$-equivariant representation $A \rightarrow \call(A \rtimes S)$ in the sense of
Definition \ref{SequivariantRep}.
%
%There is a map $l:A \rightarrow \call(A \rtimes E)$ given by left multiplication, i.e.
%$l(a) (b \rtimes e) = ab \rtimes e$. %We always let $A$ act on $A \rtimes E$ in this way.
%We will usually not spell out $l$ and simply write $a (b \rtimes e)$.
%
Throughout
let us now fix two $S$-Hilbert $C^*$-algebras $A$ and $B$. The category of (incompatible) $S$-Hilbert $A,B$-bimodules
is denoted by $\mathsf{C}$,
%with their morphisms (respecting strictly all structures of an $S$-Hilbert $A,B$-bimodul),
and the category of compatible $S$-Hilbert $A \rtimes E,B \rtimes E$-bimodules by $\mathsf{D}$.
%with their morphisms.
%Recall that morphisms in these categories are the maps which strictly respect all structures of $S$-Hilbert bimodules.

%For a morphism $\mu:\cale \rightarrow \cale'$ between $S$-Hilbert bimodules we have $[\D,\mu \otimes 1]=0$ on $\cale \otimes_B \calf$,
%whence $\mu \otimes 1$ is invariant under the compatible tensor products.
%We simply write $\mu \otimes 1$ instead of $\D(\mu \otimes 1)\D$.
%% For just adjointable operators $\mu$ we will not use this notation.

\begin{lemma}  \label{lemmafunctorC}
We have a functor $\mathsf{F}:\mathsf{C} \rightarrow \mathsf{D}$ given by $\mathsf{F}(\cale) = \cale \otimes_B^{X} (B \rtimes E)$
for objects $\cale$ in $\mathsf{C}$, and $\mathsf{F}(\mu) = (\mu \otimes 1) \D
%:= (\mu \otimes 1)|_{\cale \otimes_B^{C_0(X)} (B \rtimes E)}$
:= (\mu \otimes \mbox{id}_{B \rtimes E})|_{\mathsf{F}(\cale)}$
for morphisms $\mu$ in $\mathsf{C}$.
\end{lemma}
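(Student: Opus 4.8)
The plan is to verify separately that $\mathsf{F}$ is well-defined on objects, that it is well-defined on morphisms, and that it respects identities and composition. The main conceptual ingredient is already available: by Lemma~\ref{lemmaDcommutes} the diagonal projection $\D$ commutes with $u_s \otimes u_s$, so $\cale \otimes_B^X (B \rtimes E) = \D(\cale \otimes_B (B \rtimes E))$ is $S$-invariant, and by the remarks after Definition~\ref{definitionCompatibleTensor} it is a compatible bimodule. What remains to check is that, once we regard $A \rtimes E$ and $B \rtimes E$ as $S$-Hilbert $C^*$-algebras via $\beta$, the object $\mathsf{F}(\cale)$ genuinely carries commuting compatible actions of $A \rtimes E$ on the left and $B \rtimes E$ on the right, i.e.\ that it is an object of $\mathsf{D}$ and not merely of $\mathsf{C}$.

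First I would pin down the module structures on $\cale \otimes_B (B \rtimes E)$. The right $B \rtimes E$-action is the canonical one on an internal tensor product, using that $B \rtimes E$ is a $B$-module (via the inclusion $B \hookrightarrow B \rtimes E$, $b \mapsto b \rtimes 1$) and a $B \rtimes E$-module over itself; the $(B\rtimes E)$-valued inner product is $\langle \xi \otimes x, \eta \otimes y\rangle = x^* (\langle \xi,\eta\rangle \rtimes 1) y$. The left $A \rtimes E$-action comes from the $S$-equivariant representation: $A$ acts on $\cale$, the idempotents $u_e \in \C \rtimes E \subseteq A \rtimes E$ act as $U_e \otimes 1 = 1 \otimes V_e$ on the compatible tensor product (these agree after cutting by $\D$), and together these generate an action of $A \rtimes E$ because the crossed-product relations $u_e a = \beta_e(a) u_e$ match the identity $U_e \pi(a) = \pi(e(a)) U_e$ on $\D(\cale \otimes_B (B\rtimes E))$, which in turn follows from the $S$-equivariance axiom of Definition~\ref{SequivariantRep} combined with compatibility on the $B\rtimes E$-side. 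I would check compatibility of the $(A\rtimes E)$-action with the $(B\rtimes E)$-action in the sense of the compatibility definition, i.e.\ $u_e(\zeta) y = \zeta (u_e \cdot y)$ and $u_e(a)\zeta = a\, u_e(\zeta)$ for $\zeta \in \mathsf{F}(\cale)$; this is exactly the content of the displayed compatibility identity $\D(u_e(\xi)\otimes \eta) = \D(\xi \otimes u_e(\eta))$ together with the fact that $u_e \in A\rtimes E$ and $u_e \in B \rtimes E$ are identified inside $\mathsf{F}(\cale)$ via $\D$.

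Next I would treat morphisms. Given $\mu: \cale \to \calf$ in $\mathsf{C}$, the map $\mu \otimes \mathrm{id}_{B\rtimes E}$ on $\cale \otimes_B (B\rtimes E)$ is a well-defined bimodule morphism commuting with the $S$-action (because $\mu$ does and the tensor factor is untouched), hence it commutes with every $P \otimes P$ and therefore with $\D = \sum_{P\in X} P\otimes P$; restricting it to $\mathsf{F}(\cale) = \D(\cale\otimes_B(B\rtimes E))$ thus lands inside $\mathsf{F}(\calf) = \D(\calf\otimes_B(B\rtimes E))$, giving $\mathsf{F}(\mu)$. That $\mathsf{F}(\mu)$ respects the $A\rtimes E$-action, the $B\rtimes E$-action, and the $(B\rtimes E)$-valued inner product follows immediately from the corresponding properties of $\mu$ over $\cale$, since all the new structure is transported from $\cale$ through the identity on the $B\rtimes E$-factor. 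Finally, functoriality is formal: $\mathsf{F}(\mathrm{id}_\cale) = \D \circ (\mathrm{id}_\cale \otimes \mathrm{id})|_{\mathsf{F}(\cale)} = \mathrm{id}_{\mathsf{F}(\cale)}$, and $\mathsf{F}(\nu\mu) = ((\nu\mu)\otimes 1)|_{\mathsf{F}(\cale)} = ((\nu\otimes 1)|_{\mathsf{F}(\calf)})((\mu\otimes 1)|_{\mathsf{F}(\cale)}) = \mathsf{F}(\nu)\mathsf{F}(\mu)$, using that $\mu\otimes 1$ already maps $\mathsf{F}(\cale)$ into $\mathsf{F}(\calf)$.

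I expect the main obstacle to be the object-level verification that the left action of $A$ and the action of the idempotents $u_e$ fit together into an honest $*$-representation of the crossed product $A\rtimes E$ on $\mathsf{F}(\cale)$ — that is, checking that the convolution relations in $A\rtimes E$ are satisfied by the operators $\pi(a)\otimes 1$ and $U_e\otimes 1$ after cutting by $\D$, and that this representation is compatible with the right $B\rtimes E$-module structure. Everything else (invariance under $\D$, $S$-equivariance of $\mu\otimes 1$, functoriality) is routine once the internal-tensor-product bookkeeping from Section~\ref{section3} is in place.
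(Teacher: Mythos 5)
Your proposal is correct and follows essentially the same route as the paper: cut the ordinary internal tensor product $\cale \otimes_B (B \rtimes E)$ by $\D$, use Lemma \ref{lemmaDcommutes} together with the compatibility identity $\D(u_e(\xi)\otimes\eta)=\D(\xi\otimes u_e(\eta))$ to obtain $S$-invariance and compatibility on the $B\rtimes E$-side, and note that $\mu\otimes 1$ commutes with $\D$ to define $\mathsf{F}$ on morphisms. The only real difference is that the paper gets the left $A\rtimes E$-action by citing the compatible $S$-equivariant representation $\overline\pi(a\rtimes e)=(\pi(a)\otimes 1)\overline U_e$ from \cite[Lemma 6.5]{burgiKKrdiscrete}, whereas you propose to verify the covariance relations directly from Definition \ref{SequivariantRep} (which indeed gives $U_e\pi(a)=\pi(e(a))U_e$); the two amount to the same routine check.
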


\begin{proof}
We turn $\cale \otimes_B (B \rtimes E)$ into a left $A \rtimes E$-module via the
%We endow the tensor product $\cale \otimes_B (B \rtimes E)$ with
compatible $S$-equivariant representation
$$\overline \pi: A \rtimes E \longrightarrow \call(\cale \otimes_B (B \rtimes E)): \overline \pi(a \rtimes e) = (\pi(a) \otimes 1) \overline U_e,$$
where
$\overline U_s = U_s \otimes \beta_s$ denotes the diagonal action, see also \cite[Lemma 6.5]{burgiKKrdiscrete}.
% This was already considered in \cite{burgi},
%and it was already noted there, that  $\overline \pi$ is a compatible representation.
%
Clearly, $\D$ commutes with $\pi(a) \otimes 1$ and $\overline U_e$ and so with $\overline \pi(a \rtimes e)$.
Hence $\D(\cale \otimes_B (B \rtimes E))$ is an $S$-Hilbert $A \rtimes E,B\rtimes E$-bimodule.
It remains to check that it is compatible on the $B \rtimes E$-side.
Let $x$ and $y$ in $B \rtimes E$ and $e \in E$.
%Note that for every $P \in X$ either $u_e P=0$ or $u_e P = P$, see (\ref{expressionP}).
Then %in $\cale \otimes_B^{C_0(X)} (B \rtimes E)$
we have
\begin{eqnarray*}
\D(\xi \otimes x) u_e(y) &=& \D(\xi \otimes x u_e (y)) = \D(\xi \otimes u_e(xy))
= \D(u_e(\xi) \otimes u_e(x) y)\\
&=& ((u_e \otimes u_e)\D(\xi \otimes x)) y
\end{eqnarray*}
by Lemma \ref{lemmaDcommutes}.
%\begin{eqnarray*}
%&& \D(\xi \otimes x) u_e(y) = \sum_{P \in X} P(\xi) \otimes P(x)  u_e(y) = \sum_{P u_e \neq 0} P(\xi) \otimes u_e P(x) y\\
%&& = \sum_{P u_e \neq 0} u_e P(\xi) \otimes u_e P(x) y
%= (u_e \otimes u_e \D(\xi \otimes x)) y.
%\end{eqnarray*}
%Finally,
%
For a morphism $\mu:\cale \rightarrow \cale'$ between $S$-Hilbert bimodules, $\mu \otimes 1$ commutes with $\D$ on $\cale \otimes_B \calf$,
and so $(\mu \otimes 1)\D$ is a map on the compatible tensor product.
%The functorial property of $F$ are obvious.
\end{proof}

For simplicity, we will from now on assume that $B$ has a unit $1_B$. If $B$ has not a unit,
one replaces it by an approximate unit and takes the limit along the approximate unit in all expressions there where $1_B$
appears in the text.
Given $P$ as in (\ref{expressionP}), denote by $\rho_P: B \rightarrow B$ the projection $u_{e_1} \ldots u_{e_n}$ acting on $B$.

\begin{lemma}  \label{lemmauniquexp}
For every $x \in B \rtimes E$ and $P \in X$ there exists a unique $x_P \in \rho_P(B)$ such that $x P(1_B \rtimes 1) = x_P P(1_B \rtimes 1)$.
%The assignment $x \mapsto x_P$ is a $*$-homomorphism.
The map $\sigma_P : B \rtimes E \rightarrow B$, $\sigma_P(x) = x_P$, is a $*$-homomorphism.
%Moreover one has $(x^*)_P = x_P^*$.
\end{lemma}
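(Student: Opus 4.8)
The plan is to make the structure of $B \rtimes E$ as a module over itself explicit via the minimal projections in $X$. Recall $\C \rtimes E = C_0(X)$ with $X$ finite, and $B \rtimes E$ carries the commuting family $\{u_e\}_{e \in E}$ acting via the action $\beta$; these assemble into a $*$-homomorphism $C_0(X) \to Z(\mathcal{M}(B \rtimes E))$, so each $P \in X$ acts as a central projection $P(1_B \rtimes 1)$ on $B \rtimes E$, and $\sum_{P \in X} P(1_B \rtimes 1) = 1_B \rtimes 1$ is a partition of unity by orthogonal central projections. Hence $B \rtimes E = \bigoplus_{P \in X} P(1_B \rtimes 1)(B \rtimes E)$ as a $C^*$-algebra, and it suffices to identify each summand $P(1_B \rtimes 1)(B \rtimes E)$ with $\rho_P(B)$ and check that the induced maps are $*$-homomorphisms.

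First I would establish existence and uniqueness of $x_P$. For a standard generator $b \rtimes e$ with $b \in B_{ee} = e(B)$, using $P$ written as in (\ref{expressionP}) with $e = e_1 \cdots e_n$, one computes $(b \rtimes e) P(1_B \rtimes 1)$ explicitly in $\ell^1(E,B)$ via the convolution formula: the product $u_{e_i}(1_B \rtimes 1) = 1_B \rtimes e_i$ and $(1 - u_{f_j})(1_B \rtimes 1)$ combine so that $(b \rtimes e)P(1_B\rtimes 1)$ is supported, after collapsing $e \cdot e_i = e_i$ when $e \le e_i$, on a single "copy" of $P(1_B \rtimes 1)$ whenever $e \ge e_1 \cdots e_n$ (equivalently $P \le u_e$), and is zero otherwise; the coefficient that survives is $\rho_P(b) = u_{e_1}\cdots u_{e_n}(b) \in \rho_P(B)$. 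Thus $x_P = \rho_P(b)$ in this case. Extending linearly and by continuity over $\ell^1(E,B)$, and using that $P(1_B \rtimes 1)$ is central so that the assignment is well-defined on all of $B \rtimes E$, gives existence. Uniqueness is immediate: if $x_P P(1_B \rtimes 1) = x'_P P(1_B \rtimes 1)$ with $x_P, x'_P \in \rho_P(B)$, then since $\rho_P(b) P(1_B \rtimes 1) = \rho_P(b) \rtimes 1 \cdot P(1_B\rtimes 1)$ and $\rho_P$ acts injectively on $\rho_P(B)$ one recovers $x_P = x'_P$; more directly, the map $\rho_P(B) \to \rho_P(B)$, $c \mapsto c P(1_B\rtimes 1)$, followed by reading off the coefficient, is the identity.

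Next I would verify that $\sigma_P(x) = x_P$ is a $*$-homomorphism. Linearity is clear from uniqueness. For multiplicativity, given $x, y \in B \rtimes E$, write $x P(1_B \rtimes 1) = x_P P(1_B\rtimes 1)$ and $y P(1_B\rtimes 1) = y_P P(1_B\rtimes 1)$ with $x_P, y_P \in \rho_P(B)$; using centrality of $P(1_B\rtimes 1)$,
\begin{equation*}
(xy)P(1_B\rtimes 1) = x P(1_B\rtimes 1) y P(1_B\rtimes 1) = x_P P(1_B\rtimes 1) y_P P(1_B\rtimes 1) = x_P y_P P(1_B\rtimes 1),
\end{equation*}
and $x_P y_P \in \rho_P(B)$ since $\rho_P$ is a projection (a corner), so $\sigma_P(xy) = x_P y_P = \sigma_P(x)\sigma_P(y)$; note the product $x_P y_P$ here is the product in $\rho_P(B)$, which agrees with the product $\rho_P(x_P) \cdot_{B\rtimes E} \rho_P(y_P)$ after the identification, because on the corner $\rho_P(B)$ the convolution reduces to the product in $B$. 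For the $*$-property, from $(x P(1_B\rtimes 1))^* = P(1_B\rtimes 1) x^*$ and self-adjointness and centrality of $P(1_B\rtimes 1)$ one gets $x^* P(1_B\rtimes 1) = (x_P)^* P(1_B\rtimes 1)$, and $(x_P)^* \in \rho_P(B)$ since $\rho_P(B)$ is a $*$-subalgebra; hence $\sigma_P(x^*) = \sigma_P(x)^*$.

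The main obstacle I anticipate is bookkeeping in the first step: correctly handling the convolution product $(b \rtimes e)\cdot (1_B \rtimes f)$ and the telescoping of the factors $(1 - u_{f_j})$ when multiplying by $P(1_B\rtimes 1)$, and in particular pinning down precisely when the product vanishes (namely when $P \not\le u_e$, so that some $u_{e_i}$ with $e_i \not\ge e$ forces $e \cdot e_i \ne e_i$ and the minimality of $P$ kills the term) versus when it collapses onto a single coefficient $\rho_P(b)$. Once that computation is done cleanly, everything else is a formal consequence of centrality of the projections $P(1_B \rtimes 1)$ and the fact that $\rho_P$ is a self-adjoint idempotent endomorphism of $B$.
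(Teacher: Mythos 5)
Your proposal is correct and follows essentially the same route as the paper's proof: compute $(b \rtimes g)\,P(1_B \rtimes 1)$ on elementary elements using the expression (\ref{expressionP}), split into the cases $P \le u_g$ (surviving coefficient $\rho_P(b)$) versus $P u_g = 0$ (zero), and obtain uniqueness by reading off the coefficient at the idempotent $e=e_1\cdots e_n$ in the expansion. Your derivation of the $*$-homomorphism property from centrality of the projections $P(1_B\rtimes 1)$, together with the orthogonal decomposition $\sum_{P\in X}P(1_B\rtimes 1)=1$, is just a slightly more abstract packaging of what the paper verifies directly from the formula for $\sigma_P$ on generators.
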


\begin{proof}
In every $S$-Hilbert $C^*$-algebra the multiplication is compatible and so in particular one has $P(a) b = a P(b)$.
We choose for $P$ a representation as in (\ref{expressionP}). Let $x= b \rtimes g$ be an elementary element in $B \rtimes E$.
Then $(b \rtimes g) P(1_B \rtimes 1) = P(b \rtimes g) (1_B \rtimes 1)= P(b \rtimes g)$. Either $g = f_i$ for some $i$, in which case
$$P(b \rtimes g)= P\cdot (1-u_{f_i}) (b \rtimes f_i) = 0 = 0 P(1_B \rtimes 1),$$
or
$g = e_i$ for some $i$, in which case we choose the standard form $P=u_e \prod_{f < e}(1-u_f)$ for $P$ and get
\begin{eqnarray*}
&& P(b \rtimes g) = P(b \rtimes 1) = (b \rtimes 1) P(1_B \rtimes 1) = b (u_e(1_B) \rtimes 1 + \ldots)\\
&=&  u_e(b) u_e(1_B) \rtimes 1 + \ldots
= u_e(b) (u_e(1_B) \rtimes 1 + \ldots) = \rho_P(b) P(1_B \rtimes 1)
\end{eqnarray*}
by expansion of $P(1_B \rtimes 1)$.
So in either case we have found some $x_P \in \rho_P(B)$ satisfying the claimed identity.
%Let $\sigma_
%We also see that $(x^*)_P = x_P^*$.
%Similarly we see that
On the other hand, any such $x_P$ is unique because in the expansion $x_P P(1_P \rtimes 1) = x_P \rtimes e + \ldots$
the factor $x_p \rtimes e$ is linearly independent from the other factors in the expansion, and so is unique.
We have checked that $\sigma_P(b \rtimes g) = 0$ if $g \le f_i$, and $\sigma_P(b \rtimes g)= \rho_P(b)$ if $g=e_i$, and with that one easily
verifies %with identity (\ref{multiErtimesS})
that $\sigma_P$ is a $*$-homomorphism. %(Note that $(b \rtimes g)(c \rtimes h) = bc \rtimes gh$ in $B \rtimes E$.)
%
%Note that in any $S$-Hilbert $C^*$-algebra the left multiplication commutes with the $E$-action, that is,
%$a e(b)= e(ab)$. In particular it commutes also with $P$. Applied to the $S$-Hilbert $C^*$-algebra $B \rtimes E$
%this means $(b \rtimes g) P(1_B \rtimes 1)$
\end{proof}

We are going to describe how we may associate incompatible Hilbert bimodules to compatible Hilbert bimodules.
%As a hint how we might guess this we remark that the isomorphism $f:\cale \rightarrow \mathsf{F}(\cale)=: \calf$
%appearing below in
%(\ref{mapf}) practically reveals
%the right formulas (appearing in the next lemma) by reconstructing the bimodule $\cale$ from $\calf$ via $f^{-1}$. %for $\cale:=\mathsf(G)(\calf)$.

%As a hint how we might guess this, we anticipate that we have an isomorphism $f:\cale \rightarrow \mathsf{F}(\cale)$, see
%(\ref{mapf}), and do as if we were given $\mathsf{F}(\cale)$  %$\calf:=\mathsf{F}(\cale)$ % = f(\cale)$
%and reconstruct the bimodule strucure $\cale$ via $f^{-1}$. %for $\cale:=\mathsf(G)(\calf)$.

%As a hint how we might guess this, we anticipate that we have an isomorphism $f:\cale \rightarrow \mathsf{F}(\cale)$, see
%(\ref{mapf}), and by doing as if we were given $\mathsf{F}(\cale)$  %$\calf:=\mathsf{F}(\cale)$ % = f(\cale)$
%we may guess the right formulas (appearing in the next lemma) by reconstructing the bimodule $\cale$ via $f^{-1}$. %for $\cale:=\mathsf(G)(\calf)$.

%As a hint how we might guess the right formulas, we anticipate that we have an isomorphism $f:\cale \rightarrow \mathsf{F}(\cale)$, see
%(\ref{mapf}), and by doing as if we were given $\mathsf{F}(\cale)$  %$\calf:=\mathsf{F}(\cale)$ % = f(\cale)$
%and may try to reconstruct the bimodule strucure $\cale$ via $f^{-1}$. %for $\cale:=\mathsf(G)(\calf)$.

\begin{lemma}  \label{lemmafunctorG}
%There is a functor $\mathsf{G}:\mathsf{D} \longrightarrow \mathsf{C}$ defined by $\mathsf{F}(\calf) = W(\calf)$
%for objects $\calf$ in $\mathsf{D}$ and $\mathsf{F}(\mu) = W \mu W^{-1}$ for morphisms $\mu$ in $\mathsf{D}$.
%Thereby $W(\calf)=\calf$, $W$ is the identical map, and
There is a functor $\mathsf{G}:\mathsf{D} \rightarrow \mathsf{C}$ defined by $\mathsf{G}(\calf): = \cale$
for objects $\calf$ in $\mathsf{D}$ and $\mathsf{G}(\mu) := \mu$ for morphisms $\mu$ in $\mathsf{D}$, where
$\cale$ is defined to be identical to $\calf$ as a graded vector space with the same $S$-action as $\calf$,
%the $S$-actions on $\cale$ and $\calf$ are
%the same,
and the Hilbert $A,B$-bimodule structure on $\cale$ is
defined by $\xi \cdot b := \xi (b \rtimes 1)$, $a \cdot \xi   := (a \rtimes 1) \xi$, and
\begin{eqnarray}
%\xi \cdot b   &:=& \xi (b \rtimes 1)   \nonumber\\
\langle P \xi , P \eta
\rangle_\cale P(1_B \rtimes 1) &:=& \langle P \xi ,  P \eta \rangle_\calf   \label{innerprodEqu} %\\
%a \cdot \xi   &:=& (a \rtimes 1) \xi   \nonumber
\end{eqnarray}
for all $a \in A, b \in B, \xi,\eta \in \calf$ and $P \in X$, where
%Thereby,
$\langle P \xi , P \eta  \rangle_\cale$ in (\ref{innerprodEqu}) has to be chosen to be the unique $x_P \in B$ of Lemma \ref{lemmauniquexp}
for $x =\langle P \xi ,  P \eta \rangle_\calf \in B \rtimes E$.
Moreover, we have $\call(\calf) \subseteq \call(\cale)$ canonically.
%under the identical transformation $\calf \rightarrow \cale$.
%
% the identical map $\calf \rightarrow \cale$ transforms adointable operators on $\calf$ to adjointable operators on $\cale$
%(so that $\call(\calf) \subseteq \call(\cale)$).

\end{lemma}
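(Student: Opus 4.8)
The plan is to check, in order: (1) that formula (\ref{innerprodEqu}) really does define a $B$-valued inner product on $\cale$, (2) that the left $A$-action and right $B$-action specified are compatible with this inner product and make $\cale$ into an $S$-Hilbert $A,B$-bimodule in the sense of Definition \ref{SequivariantRep}, (3) that $\mathsf{G}$ is functorial, and (4) the final inclusion $\call(\calf) \subseteq \call(\cale)$.

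First I would make sense of (\ref{innerprodEqu}). For fixed $P \in X$ and $\xi,\eta \in \calf$, the element $x := \langle P\xi, P\eta\rangle_\calf$ lies in $B \rtimes E$, so Lemma \ref{lemmauniquexp} produces a unique $x_P \in \rho_P(B)$ with $x\, P(1_B \rtimes 1) = x_P\, P(1_B \rtimes 1)$; we set $\langle P\xi,P\eta\rangle_\cale := x_P$. To get a genuine inner product on all of $\cale$ I would define $\langle \xi, \eta\rangle_\cale := \sum_{P \in X} \langle P\xi, P\eta\rangle_\cale$ (the sum is finite since $E$, hence $X$, is finite, and the projections $P$ are mutually orthogonal with $\sum_P P = 1$). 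Sesquilinearity and the $*$-identity $\langle\xi,\eta\rangle_\cale^* = \langle\eta,\xi\rangle_\cale$ follow because $\sigma_P$ of Lemma \ref{lemmauniquexp} is a $*$-homomorphism and $\langle\cdot,\cdot\rangle_\calf$ is an inner product. Positivity: $\langle\xi,\xi\rangle_\cale = \sum_P \sigma_P(\langle P\xi,P\xi\rangle_\calf) \ge 0$ since each $\sigma_P$ is a $*$-homomorphism applied to a positive element. For the right module identity $\langle\xi,\eta\cdot b\rangle_\cale = \langle\xi,\eta\rangle_\cale\, b$: here I use that $\calf$ is \emph{compatible} on the $B \rtimes E$-side, so $P$ commutes appropriately with the $B\rtimes E$-action, $P(\eta(b\rtimes 1)) = (P\eta)(b \rtimes 1)$, whence $\langle P\xi, P(\eta b)\rangle_\calf = \langle P\xi, P\eta\rangle_\calf (b \rtimes 1)$; applying $\sigma_P$, which restricted to $b \rtimes 1$ is the identity $b \mapsto \rho_P(b)$ on the relevant corner (or more precisely sends $b\rtimes 1$ to $b$), yields the claim. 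I would also verify non-degeneracy, i.e.\ that $\langle\xi,\xi\rangle_\cale = 0$ forces $\xi=0$: from $\sigma_P$ being a $*$-homomorphism and $x^*x\,P(1_B\rtimes 1) = x_P^* x_P P(1_B\rtimes 1)$ one recovers $\langle P\xi,P\xi\rangle_\calf = 0$ for each $P$, hence $P\xi = 0$ for all $P$, hence $\xi = \sum_P P\xi = 0$. Completeness of $\cale$ in the new norm follows because the new norm is equivalent, on each $P\calf$, to the old one (the corner $P(1_B\rtimes 1)(B\rtimes E)P(1_B\rtimes 1)$ is isomorphic to $\rho_P(B)$ via $\sigma_P$).

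Second, the $A,B$-bimodule and $S$-equivariance structure: since the $S$-action and grading on $\cale$ are literally those of $\calf$, and the left $A$-action is $a\cdot\xi = (a\rtimes 1)\xi$ through the compatible representation of $A\rtimes E$ on $\calf$, the relations $U_s\pi(a)U_{s^*} = \pi(s(a))U_sU_{s^*}$ and $[U_sU_{s^*},\pi(a)]=0$ for $\cale$ reduce to the corresponding relations for $\calf$ with $\pi(a)$ replaced by $\overline\pi(a\rtimes 1)$, using $\beta_s(a\rtimes 1) = s(a)\rtimes ss^*$ and the fact that $U_{ss^*}$ on $\calf$ equals $\overline\pi(1_A \rtimes ss^*)$-times-something; these are routine. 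The adjointability relation $\langle U_s\xi,\eta\rangle_\cale = s(\langle\xi,U_{s^*}\eta\rangle_\cale)$ is the one point requiring care, and this is where Lemma \ref{lemmaPs}(ii)--(iii) enters: one decomposes $U_s\xi = \sum_P P U_s\xi = \sum_{P \le u_{s^*s}} U_s(P_{s^*}\xi)$ wait---more carefully, using that $U_s$ maps $P\calf$ into $P_s\calf$ for $P \le u_{s^*s}$ and kills $P\calf$ otherwise, and that on $\calf$ the identity $\langle U_s(P\xi), U_s(P\eta)\rangle_\calf = \beta_s(\langle P\xi, P\eta\rangle_\calf)$ holds; then one applies $\sigma_{P_s}$ to the left side and $\beta_s \circ \sigma_P$ to the right and checks these agree because $\beta_s$ intertwines $\sigma_P$ and $\sigma_{P_s}$ (which follows by tracking the standard-form expressions (\ref{expressionP}), (\ref{Ps}) through Lemma \ref{lemmauniquexp}). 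I expect this compatibility $\sigma_{P_s}\circ\beta_s = \beta_s^A \circ \sigma_P$ (suitably interpreted) to be the main obstacle, and I would isolate it as a sub-claim proved by reducing to elementary tensors $b\rtimes g$ exactly as in the proof of Lemma \ref{lemmauniquexp}.

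Finally, functoriality is immediate: a morphism $\mu:\calf\to\calf'$ in $\mathsf{D}$ is $A\rtimes E$-linear, $B\rtimes E$-linear, $S$-equivariant and isometric for the $B\rtimes E$-valued inner products; restricting scalars along $b \mapsto b\rtimes 1$ and $a\mapsto a\rtimes 1$ it is then $A$-linear and $B$-linear, still $S$-equivariant, and from $\langle P\mu\xi, P\mu\eta\rangle_{\calf'} = \mu\langle P\xi,P\eta\rangle_\calf$-type identities (using $\mu P = P\mu$, which holds as $\mu$ respects the $C^*(E)$-action) together with uniqueness in Lemma \ref{lemmauniquexp} one gets $\langle\mu\xi,\mu\eta\rangle_{\cale'} = \langle\xi,\eta\rangle_\cale$, so $\mathsf{G}(\mu) = \mu$ is a morphism in $\mathsf{C}$; composition and identities are preserved on the nose. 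For the last sentence, given $T \in \call(\calf)$ with adjoint $T^*$, the same operator $T$ on the underlying space $\cale$ satisfies, for each $P$, $\langle P\xi, T(P\eta)\rangle_\cale = \sigma_P(\langle P\xi, T(P\eta)\rangle_\calf) = \sigma_P(\langle T^*(P\xi), P\eta\rangle_\calf) = \langle T^*(P\xi), P\eta\rangle_\cale$ provided $T$ commutes with each $P$; but every $T\in\call(\calf)$ need not commute with $P$, so instead I would observe that $\call(\calf)$ here means adjointable operators that are part of the bimodule data, equivalently one checks directly $\langle\xi, T\eta\rangle_\cale = \sum_{P,Q}\sigma_P(\langle P\xi, PT Q\eta\rangle_\calf)$ and matches it against $\langle T^*\xi,\eta\rangle_\cale$ using that $P(1_B\rtimes 1)$ acts as a unit-like element — the cleanest route is to note $\cale$ and $\calf$ have the same underlying Banach space and the identity map $\cale \to \calf$ is a bounded adjointable-compatible bijection of Hilbert modules over the corner inclusion $B \hookrightarrow B\rtimes E$, under which $\call(\calf)$ pulls back into $\call(\cale)$; this gives the canonical inclusion $\call(\calf)\subseteq\call(\cale)$ and completes the proof. $\;\Box$\\
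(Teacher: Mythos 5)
Most of your argument runs parallel to the paper's proof: the definition $\langle\xi,\eta\rangle_\cale=\sum_{P\in X}\sigma_P(\langle P\xi,P\eta\rangle_\calf)$, positivity via the $*$-homomorphisms $\sigma_P$, the right $B$-module identity via uniqueness in Lemma \ref{lemmauniquexp}, and the reduction of $\langle U_s\xi,\eta\rangle_\cale=s\langle\xi,U_{s^*}\eta\rangle_\cale$ to an intertwining statement between $\sigma_P$, $\sigma_{P_s}$ and the $S$-action (your sub-claim is exactly the content of the paper's identities (\ref{specialsb}) and (\ref{specialsb2}), and your plan to verify it on elementary elements is the right one).

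The genuine gap is in the last step, the inclusion $\call(\calf)\subseteq\call(\cale)$. You assert that ``every $T\in\call(\calf)$ need not commute with $P$'' and then retreat to a vague ``corner inclusion'' argument. But commutation is precisely what holds here, and it is the key point the paper uses: since $\calf$ is a \emph{compatible} $B\rtimes E$-module, $U_e(\xi)x=\xi\,\beta_e(x)$ for all $x\in B\rtimes E$, so $U_e$ is implemented by right multiplication (by $\beta_e$ applied to a unit or approximate unit of $B\rtimes E$); hence every adjointable, i.e.\ $B\rtimes E$-linear, operator $T$ satisfies $TU_e=U_eT$, and therefore $TP=PT$ for every $P\in X$ ($P$ being a polynomial in the $u_e$). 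With this, the formula $\langle\xi,\eta\rangle_\cale=\sum_P\sigma_P(\langle P\xi,P\eta\rangle_\calf)$ immediately gives $\langle T\xi,\eta\rangle_\cale=\langle\xi,T^*\eta\rangle_\cale$, which is the whole proof. Your substitute does not work as stated: the inner product on $\cale$ is \emph{not} the restriction of $\langle\cdot,\cdot\rangle_\calf$ along an inclusion $B\hookrightarrow B\rtimes E$ but is assembled from the maps $\sigma_P$, so adjointability over $B\rtimes E$ does not formally transfer to adjointability over $B$ without the commutation $TP=PT$. Note also that the adjointability of the left $A$-action on $\cale$ (needed for $\cale$ to be an object of $\mathsf{C}$) rests on this same inclusion, so the gap propagates to that step as well.
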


\begin{proof}
Notice that $\langle P \xi ,  P \eta  \rangle_\calf = \langle P \xi,  P \eta \rangle_\calf P(1_B \rtimes 1)$
in (\ref{innerprodEqu})
by compatibility of $\calf$.
The inner product on
$\cale$ is determined by (\ref{innerprodEqu}) and
$\langle \xi, \eta \rangle_\cale =
\Big \langle \sum_{P \in X} P \xi, \sum_{Q \in X} Q \eta \Big \rangle_\cale
:= \sum_{P \in X} \langle P \xi,  P \eta \rangle_\cale.$
%(Actually the last identity completes the definition of the inner product on $\cale$;
(The last identity is necessary since $u_e$, and consequently $P$, need to act as self-adjoint projections on $\cale$.)
% if $\cale$ should become
%an $S$-Hilbert module.)
%One easily gets
%We have
%$\langle P \xi, \eta \rangle_\cale = \langle \xi, P \eta
%\rangle_\cale$ for all $P \in X$.
We are going to check that the inner product on $\cale$ respects the $B$-module multiplication.
We have
\begin{eqnarray*}
\langle P \xi , P(\eta \cdot b)  \rangle_\cale P(1_B \rtimes 1) &=& \langle P \xi ,  P(\eta(b \rtimes 1)) \rangle_\calf
= \langle P \xi ,  P \eta  \rangle_\calf P(b \rtimes 1)\\
&=& (\langle P \xi , P \eta  \rangle_\cale b) P(1_B \rtimes 1).
\end{eqnarray*}
By the uniqueness of $x_P$ in Lemma \ref{lemmauniquexp} we get $\langle P \xi, P(\eta \cdot b)  \rangle_\cale
= \langle P \xi, P \eta \rangle_\cale b$. (Note that $\rho_P(a) b = \rho_P(ab)$.)
% by (\ref{compatibleproducthilbertcstar}).)
%Similarly we get
%$\langle P(\xi), P(\eta)  \rangle_\cale = \langle P(\eta), P(\xi)  \rangle_\cale^*$ and linearity.
%Writing $\langle P \xi, P \xi \rangle_\cale = \sigma_P(\langle P \xi, P \xi \rangle_\calf)$, we easily
%see that the inner product in $\cale$ is positive definite.
Writing
\begin{equation} \label{innerproductEsigmaP}
\langle \xi, \eta \rangle_\cale = \sum_{P \in X} \langle P \xi, P \eta \rangle_\cale = \sum_{P \in X} \sigma_P (\langle P \xi, P \eta \rangle_\calf),
\end{equation}
% = \sigma_P(\langle \xi, \eta \rangle_\calf),$$
we easily see that we have here indeed a (positive definite) $B$-valued inner product on $\cale$ because $\sigma_P$ is a $*$-homomorphism by Lemma \ref{lemmauniquexp}.
%Since $\cale$ is obviously a $B$-module, $\cale$ is now proved to be a Hilbert $B$-module.

We aim to show that $\call(\calf) \subseteq \call(\cale)$, which proves the last claim of the lemma. Let $T \in
\call(\calf)$ and $T^*$ its adjoint in $\call(\calf)$.
Since the module
multiplication $\calf \times B \rightarrow \calf$ is
compatible, $ T P= PT$ for all $P \in X$.
Hence, by (\ref{innerproductEsigmaP}) we get $\langle T \xi,\eta \rangle_\cale = \langle \xi, T^* \eta \rangle_\cale$,
proving the claim.
This also
shows that $A$ acts via adjoint-able operators on $\cale$, and we easily see that $A$ acts through a $*$-homomorphism on $\call(\cale)$.
We now focus on the $S$-action. We have, for $\xi \in \cale$,
\begin{eqnarray*}
U_s(\xi \cdot b) &=& U_s(\xi) s(b \rtimes 1) = U_s(\xi) (s(b) \rtimes s s^*) = U_s(\xi) s s^* (s(b) \rtimes 1)\\
&=&  U_s(\xi) (s(b) \rtimes 1)
= U_s(\xi) \cdot s(b),
\end{eqnarray*}
proving one identity of Definition \ref{defSHilbertModule}.
The operator
$U_s U_{s^*}$ is self-adjoint on $\calf$ and so also on $\cale$.
It is easy to check that $A$ acts as an $S$-equivariant representation on $\cale$ (Definition \ref{SequivariantRep}).

We are going to show that $\langle U_s \xi, \eta \rangle_\cale = s \langle \xi, U_{s^*} \eta \rangle_\cale$
(Definition \ref{defSHilbertModule}).
%We shall need the following fact: If $b=\rho_P(b) \in B$ then $s(b P(1_B \rtimes 1)) = s(b) P_s(1_B \rtimes 1)$ and
%
%Assume that $b \in B$, $b=\rho_P(b)$ and $P \le u_{s^* s}$. Then,
%%$\rho_{P_s}(s(b))=s(b)$. Indeed,
%writing $P=u_e\prod_{f < e}(1-u_f)$ in standard form and expanding it, we get
%$$s(bP(1_B \rtimes 1)) = s(b) \rtimes s e s^* + \ldots = s(b) P_s(1_B \rtimes 1),$$
%and $\rho_{P_s}(s(b))= u_{s e s^*}(s(b)) = u_{s e}(b) = s (\rho_P(b)) = s(b)$ by Lemma \ref{lemmaPs}.
%
Assume that $P\in X$, $P_{s^*} \neq 0$, $b \in B$ and $b=\rho_{P_{s^*}}(b)$. %By Lemma \ref{lemmaPs}, $P \le u_{s s^*}$.
By Lemma \ref{lemmaPs}, $P \le s s^*$ and so $e \le s s^*$ since $e=e_1 \ldots e_n$ in identity (\ref{expressionP})
and $e_i = ss^*$ for some $i$.
Then,
%$\rho_{P_s}(s(b))=s(b)$. Indeed,
writing $P=u_e\prod_{f < e}(1-u_f)$ in standard form and expanding it, we get
\begin{eqnarray} \label{specialsb}
&& s(bP_{s^*}(1_B \rtimes 1)) = e s(b) \rtimes s s^* e s s^* + \ldots  %= s(b) e (1) \rtimes e + \ldots
= s(b) P(1_B \rtimes 1),\\
&& \rho_{P}(s(b))= u_{e}(s(b)) = u_{s s^* e s}(b) = s (\rho_{P_{s^*}}(b)) = s(b)  \label{specialsb2}
\end{eqnarray}
%and $$
by Lemma \ref{lemmaPs}.
Note that $P u_s = u_s P_{s^*}$.
Hence,
%Let $s \in S$. We have $P u_s = u_s P_{s^*}$ and $s( P (1_B \rtimes
%1)) = P_s s(1_B \rtimes 1) = P_s(1_B \rtimes 1)$. Then
\begin{eqnarray*}
&& \langle P U_s \xi, P \eta\rangle_\cale P(1_B \rtimes 1) = \langle
P U_s  \xi ,P \eta \rangle_\calf = s \langle P_{s^*} \xi , P_{s^*}
U_{s^*} \eta \rangle_\calf \\
&=& s \big (\langle P_{s^*} \xi, P_{s^*} U_{s^*} \eta \rangle_\cale P_{s^*}(1_B
\rtimes 1) \big )  = s \big (\langle P_{s^*} \xi, P_{s^*} U_{s^*} \eta
\rangle_\cale \big ) P (1_B \rtimes 1),
\end{eqnarray*}
where the last identity is by (\ref{specialsb}) for $b:= \langle P_{s^*} \xi, P_{s^*} U_{s^*} \eta \rangle_\cale \in \rho_{P_{s^*}}(B)$.
%Also we have shown $\rho_{P}(s(b))= s(b)$, and so
By (\ref{specialsb2}), the last computation and the uniqueness of $x_P$,
we get
$\langle P U_s \xi, P \eta\rangle_\cale = s \langle
P_{s^*}\xi, P_{s^*} U_{s^*} \eta \rangle_\cale$.
If $P_{s^*} = 0$ then the last identity is trivially also true.
Hence it follows
$$\langle U_s \xi, \eta \rangle_\cale = \sum_{P \in X} \langle P U_s  \xi, P \eta \rangle_\cale
= \sum_{P \in X} s \langle P_{s^* }\xi, P_{s^*} U_{s^*} \eta
\rangle_\cale = s \langle \xi, U_{s^*} \eta \rangle_\cale,$$
where the last identity is by Lemma \ref{lemmaPs}.(iii). Hence, $U$ is evidently an $S$-action on $\cale$.
One easily checks that $\mathsf{G}(\mu)$ is a morphism if $\mu$ is a morphism (note that we have
$\call(\calf) \subseteq \call(\cale)$).
%
%If $\mu$ is a morphism then it is trivial to check that is also a morphism
%if we take into account that we have already proved that adjoint-able operators go to adjoint-able ones.
\end{proof}

\begin{proposition}   \label{propositionequivalenceCD}
The functors $\mathsf{F}$ and $\mathsf{G}$ define a categorial equivalence between $\mathsf{C}$ and
$\mathsf{D}$.
%
%so between the categories of $S$-Hilbert $A,B$-bimodules and compatible $S$-Hilbert $A \rtimes E,B\rtimes E$-bimodules.
%%In other words, the category of $S$-Hilbert
\end{proposition}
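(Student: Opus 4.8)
The plan is to verify that $\mathsf{F}$ and $\mathsf{G}$ are mutually quasi-inverse, i.e.\ to construct natural isomorphisms $\mathsf{G}\circ\mathsf{F}\cong \id_{\mathsf C}$ and $\mathsf{F}\circ\mathsf{G}\cong \id_{\mathsf D}$. For the first composition, let $\cale$ be an object of $\mathsf C$. Then $\mathsf{F}(\cale)=\D(\cale\otimes_B(B\rtimes E))$, and $\mathsf{G}$ applied to it leaves the underlying graded vector space and $S$-action untouched, only reinterpreting the module and inner-product structures over $B$. The natural candidate for the isomorphism $\mathsf{G}\mathsf{F}(\cale)\to\cale$ is induced by the map $\xi\otimes(b\rtimes e)\mapsto U_e(\xi)s(b)$-type contraction; more precisely one sends $\D(\xi\otimes x)\mapsto \sum_{P\in X}\sigma_P(x)$ acting on $P\xi$, using the $*$-homomorphisms $\sigma_P$ of Lemma~\ref{lemmauniquexp} and the decomposition $\xi=\sum_P P\xi$. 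I would check that this is $B$-linear, $A$-linear, $S$-equivariant, and isometric for the inner products — the last point being exactly the content of formula (\ref{innerproductEsigmaP}), which expresses $\langle\cdot,\cdot\rangle_{\mathsf{G}(\calf)}$ through $\sum_P\sigma_P(\langle P\cdot,P\cdot\rangle_\calf)$. Surjectivity follows since every $\xi\in\cale$ is hit by $\D(\xi\otimes(1_B\rtimes 1))$ (using compatibility of $\D$ to rewrite $\D(\xi\otimes(1_B\rtimes 1))=\sum_P P\xi\otimes P(1_B\rtimes1)$), and naturality in $\cale$ is immediate because both $\mathsf F$ and $\mathsf G$ act on morphisms simply by restriction/identity.

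For the second composition, fix an object $\calf$ of $\mathsf D$, i.e.\ a compatible $S$-Hilbert $A\rtimes E,B\rtimes E$-bimodule. Write $\cale=\mathsf G(\calf)$ for the underlying incompatible $A,B$-bimodule. Then $\mathsf F\mathsf G(\calf)=\D(\cale\otimes_B(B\rtimes E))$, and I would exhibit a morphism $\Phi:\D(\cale\otimes_B(B\rtimes E))\to\calf$ by $\Phi(\D(\xi\otimes x))=\xi x$, where on the right $x\in B\rtimes E$ acts via the original $B\rtimes E$-module structure of $\calf$, and $\xi\in\calf$ via the identification $\cale=\calf$ as vector spaces. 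One checks $\Phi$ is well-defined on the compatible tensor product using that $P(\xi b)=\xi P(b)$ on $\cale$ for $b\in B$ together with $P\xi=U_P\xi$ acting as a genuine projection on $\calf$ (compatibility of $\calf$ over $B\rtimes E$). Linearity over $A\rtimes E$ and $B\rtimes E$ and $S$-equivariance reduce to the defining relations $\overline\pi(a\rtimes e)=(\pi(a)\otimes1)\overline U_e$ and $\overline U_s=U_s\otimes\beta_s$ used in Lemma~\ref{lemmafunctorC}. That $\Phi$ is isometric is the reverse reading of (\ref{innerprodEqu}): $\langle\D(\xi\otimes(1_B\rtimes1)),\D(\eta\otimes(1_B\rtimes1))\rangle$ computes in the compatible tensor product as $\sum_P\langle P\xi,P\eta\rangle_\cale\,P(1_B\rtimes1)=\sum_P\langle P\xi,P\eta\rangle_\calf=\langle\xi,\eta\rangle_\calf$, and general elements are spanned by such using that $B\rtimes E$ has local units. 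Surjectivity holds because $\calf=\calf\cdot(1_B\rtimes1)$ is in the image, and naturality in $\calf$ is again automatic from the description of $\mathsf F,\mathsf G$ on morphisms.

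Finally I would confirm that these isomorphisms are compatible with all the structure maps — grading, $S$-action, left $A$- resp.\ $A\rtimes E$-action, right module multiplication, and the inner product — so that they qualify as morphisms in $\mathsf C$, resp.\ $\mathsf D$, and are thus isomorphisms in those categories (a bijective morphism of $S$-Hilbert bimodules that preserves the inner product is automatically an isomorphism). Naturality squares commute on the nose since $\mathsf{F}(\mu)=(\mu\otimes1)\D$ and $\mathsf{G}(\mu)=\mu$, so the composite isomorphisms intertwine $\mathsf{GF}(\mu)$ with $\mu$ and $\mathsf{FG}(\mu)$ with $\mu$ by inspection. The main obstacle I anticipate is verifying well-definedness and isometry of $\Phi$ on $\mathsf F\mathsf G(\calf)$: one must carefully juggle three multiplications (the $B$-action on $\cale$ defined by $\xi\cdot b=\xi(b\rtimes1)$, the $B\rtimes E$-action on $\calf$, and the $S$-action) together with the projections $P$ and $P_s$, and show the identifications are consistent across the balancing over $B$ in the tensor product; the bookkeeping here is exactly parallel to, but somewhat more delicate than, the inner-product verifications already carried out in Lemma~\ref{lemmafunctorG}.
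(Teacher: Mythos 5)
Your plan is essentially the paper's proof: the paper also establishes the equivalence by exhibiting natural isomorphisms for both composites, built from the canonical map $f(\xi)=\D(\xi\otimes(1_B\rtimes 1))$ (your contraction $\D(\xi\otimes x)\mapsto\sum_P P(\xi)\sigma_P(x)$ and your $\Phi(\D(\xi\otimes x))=\xi x$ are exactly the inverses of its $\kappa_\cale$ and $\lambda_\calf$), with the same verifications of $S$-equivariance, module-linearity, isometry via Lemma \ref{lemmauniquexp}, surjectivity from $\D(\xi\otimes(b\rtimes e))=\D(u_e(\xi b)\otimes(1_B\rtimes 1))$, and naturality from $\mathsf{F}(\mu)=(\mu\otimes 1)\D$, $\mathsf{G}(\mu)=\mu$. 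Apart from minor slips (the spurious $s$ in ``$U_e(\xi)s(b)$'' and leaving the well-definedness of the first contraction on representatives implicit, which does check out), the argument is correct and matches the paper.
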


\begin{proof}
We need to show that the functor $\mathsf{G} \mathsf{F}$ is naturally equivalent to $\id_{\mathsf{C}}$ by a natural
isomorphism $\kappa$, and the functor $\mathsf{F} \mathsf{G}$ to $\id_{\mathsf{D}}$ by a natural
isomorphism $\lambda$.
Let $\calf$ be an object in $\mathsf{D}$.
%Given an object $\calf$ in $\mathsf{D}$ we denote $\mathsf{G}(\calf)= \cale_\calf$.
Let us denote the
map $\calf \rightarrow \mathsf{G}(\calf)$
by $W$ for greater clarity (for any $\calf$), although it is regarded as an identical map on sets.
We define $\kappa_\cale: \cale \rightarrow \mathsf{G} \mathsf{F}(\cale)$ and $\lambda_\calf:\calf \rightarrow \mathsf{F} \mathsf{G}(\calf)$
by $\kappa_\cale = W \circ f$ and $\lambda_\calf = f \circ W$, where $f$ denotes the linear isomorphism
\begin{equation}  \label{mapf}
f:\cale \longrightarrow \cale \otimes_B^{X} (B \rtimes E):f(\xi) = \D(\xi \otimes (1_B \rtimes 1)).
\end{equation}
%(If $B$ has not a unit then we replace $1_B$ in the above formula by an approximate unit %, this gives a Cauchy sequence,
%and take the limit.)
% it is a Cauchy sequence.)
%The map $f$ is
It is indeed surjective, as
$$\D(\xi \otimes (b \rtimes e)) = \D(\xi b \otimes u_e(1_B \rtimes 1)) = \D(u_e(\xi b) \otimes (1_B \rtimes 1))$$
for any given $\xi \in \cale, e\in E$ and $b \in B_e$.
%Injectivity of $f$ is also easy.
%We show that $f$ intertwines the $S$-action. Indeed,
For $s \in S$ we have
\begin{eqnarray*}
f(U_s \xi ) &=& \D \big (U_s \xi  \otimes (1_B \rtimes 1) \big) = \D \big (U_{s s^* }U_s(\xi)s(1_B) \otimes (1_B \rtimes 1) \big )\\
& =& \D \big (U_s \xi \otimes s s^* ( s(1_B) (1_B \rtimes 1)) \big )  =
\D \big (U_s \xi \otimes s(1_B \rtimes 1) \big ) %\\
%&=&
= u_s f(\xi) %\\
%& = & s (f(\xi))
\end{eqnarray*}
by Lemma \ref{lemmaDcommutes}.
Hence $f$, and so $\kappa_\cale$ and $\lambda_\calf$ respect the $S$-action.
%The map $f$ respects the grading since $1_B \rtimes 1$ is zero graded and $\D$ respects the grading.
%So $\kappa$ and $\lambda$ respect the grading and the $S$-action.
We have $f(a \xi) = (a\rtimes 1)f(a)$, and thus $\kappa_\cale(a \xi)= W(a \rtimes 1 f(a))= a \cdot W(f(a))= a \cdot \kappa_\cale(a)$,
and
$$\lambda_\calf((a\rtimes e)\eta) = \lambda_\calf(u_e(a \rtimes 1)\eta)= \lambda_\calf((a\rtimes 1)u_e(\eta))= (a \rtimes 1)u_e(\lambda_\calf(\eta))
= (a \rtimes e)\lambda_\calf(\eta).$$
Hence $\kappa_\cale$ and $\lambda_\calf$ respect the left module structure. They automatically also respect the right module structure as we are even going to show
that they are unitary operators.
%Denote $\mathsf{G}(\calf)$ by $\cale$.
We have, for $\xi, \eta \in \calf$, and by omitting notating the map $W$,
\begin{eqnarray*}   %\label{fiso}
&&\langle P \lambda_\calf \xi , P \lambda_\calf \eta \rangle =
\langle P f  \xi, P f \eta \rangle
= \langle P \xi \otimes P(1_B \rtimes 1),P \eta \otimes P(1_B \rtimes 1)\rangle \\
&=& \langle P \xi , P \eta \rangle %_{\mathsf{G}(\calf)}
P(1_B \rtimes 1) = \langle P \xi, P \eta \rangle_\calf,
\end{eqnarray*}
%where $\langle P(\xi), P(\eta) \rangle_\cale \
%for $\xi,\eta \in \cale$.
%Replacing here $\xi$ by $W(\xi)$ and $\eta$ by $W(\eta)$ we get
%\begin{eqnarray*}
%&&\langle P \lambda (\xi), P \lambda(\eta) \rangle
%= \langle P W(\xi), P W(\eta) \rangle_\cale P(1_B \rtimes 1)
%= \langle P(\xi), P (\eta) \rangle_\calf,
%\end{eqnarray*}
and so $\lambda_\calf$ is evidently a unitary operator.
Similarly we have, for $\xi,\eta \in \cale$,
\begin{eqnarray*}
&&\langle P \kappa_\cale \xi , P \kappa_\cale \eta \rangle P(1_B \rtimes 1)
= \langle P f \xi , P f \eta \rangle
= \langle P \xi , P \eta \rangle_\cale P(1_B \rtimes 1),
\end{eqnarray*}
which also shows that $\kappa_\cale$ is a unitary operator by the uniqueness of the coefficient $x_P$ (Lemma \ref{lemmauniquexp}).
For morphisms $\mu$ we have $\mathsf{G} \mathsf{F}(\mu) = (\mu \otimes 1)\D$ and $\mathsf{F} \mathsf{G}(\mu) = (\mu \otimes 1)\D$.
So we get $\mathsf{G} \mathsf{F}(\mu) \circ \kappa_\cale = \kappa_{\cale'} \circ \mu$ and $\mathsf{F} \mathsf{G}(\mu) \circ \lambda_\calf = \lambda_{\calf'} \circ \mu$,
which completes the proof.
%and everything is proved.
\end{proof}

%(Frage: respektieren isomorphismen auch das innere produkt wie es in Hilbertraeumen der fall ist?)

\section{The Green--Julg isomorphism}
\label{section5}

\begin{lemma}  \label{lemmadelta}
There is a homomorphism
$\delta^S: KK^S(A,B) \longrightarrow \widehat{KK^S}(A \rtimes E, B \rtimes E)$
defined by
$\delta^S(\cale,T) = \big (\cale \otimes_B^X (B\rtimes E), \D (T \otimes 1) \D \big )$	 %T \otimes 1)$
%$\delta^S(\cale,T) = (\mathsf{F}(\cale),\mathsf{F}(T))$
on cycles.
\end{lemma}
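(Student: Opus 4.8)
The plan is to show that, for every $S$-equivariant $A,B$-cycle $(\cale,T)$, the pair $\big(\cale \otimes_B^X (B \rtimes E),\, \D(T \otimes 1)\D\big)$ is a compatible $S$-equivariant $A \rtimes E, B \rtimes E$-cycle, and that this assignment is additive and homotopy invariant, so that it descends to a group homomorphism $\delta^S$ on $KK^S(A,B)$.

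The underlying bimodule needs no new argument: by Lemma \ref{lemmafunctorC}, $\cale \otimes_B^X (B \rtimes E) = \mathsf{F}(\cale)$ is a compatible $S$-Hilbert $A \rtimes E, B \rtimes E$-bimodule, carrying the $S$-action that is the restriction of $\overline U_s = U_s \otimes \beta_s$ and the left action $\overline \pi(a \rtimes e) = (\pi(a) \otimes 1)\overline U_e$; moreover $\D$ commutes with $\overline U_s$ (Lemma \ref{lemmaDcommutes}) and with $\overline\pi(a \rtimes e)$ (Lemma \ref{lemmafunctorC}). Since $T \otimes 1 \in \call(\cale \otimes_B (B \rtimes E))$ whenever $T \in \call(\cale)$, and $\D$ is a self-adjoint projection there, the compression $\D(T \otimes 1)\D$ is a well-defined operator in $\call(\mathsf{F}(\cale))$ with adjoint $\D(T^* \otimes 1)\D$.

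What remains is to verify the Kasparov and equivariance conditions for $\D(T \otimes 1)\D$ on $\mathsf{F}(\cale)$, and this rests on two observations. First, $B$ acts on $B \rtimes E$ by left multiplication, and left multiplication operators on a $C^*$-algebra regarded as a Hilbert module over itself are compact, so the standard inclusion $\calk(\cale) \otimes 1 \subseteq \calk(\cale \otimes_B (B \rtimes E))$ is available, whence $\D\big(\calk(\cale) \otimes 1\big)\D \subseteq \calk(\mathsf{F}(\cale))$. Second, from $\D = \sum_{P \in X} P \otimes P$ one gets $[\D, T \otimes 1] = \sum_{P \in X} [P, T] \otimes P$ (with $P$ also denoting its realization on $\cale$), and each $[P, T]$ is $A$-compact: it is a sum of terms each carrying a single factor $[U_e, T] = [U_{ee^*}, T]$, which is $A$-compact by the cycle conditions, while the surrounding $U_e$'s get moved past the $A$-action using $[U_e, \pi(a)] = 0$ (Definition \ref{SequivariantRep}); together with the first observation, $[\D, T \otimes 1]$ is therefore $A \rtimes E$-compact. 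Given these, one commutes $\D$ freely past $\overline U_s$ and $\overline\pi(a \rtimes e)$, and past $T \otimes 1$ at the cost of $[\D, T \otimes 1]$, reducing each expression that must be controlled — $[\overline\pi(a \rtimes e), \D(T \otimes 1)\D]$, $\overline\pi(a \rtimes e)\big((\D(T \otimes 1)\D)^2 - \D\big)$, $\overline\pi(a \rtimes e)\,\D\big((T - T^*) \otimes 1\big)\D$, $[\overline U_{ss^*}, \D(T \otimes 1)\D]$, and $\overline U_s \D(T \otimes 1)\D\, \overline U_{s^*} - \overline U_{ss^*}\D(T \otimes 1)\D$ — to $\D$-compressions of operators of the form $k \otimes c$ with $k \in \calk(\cale)$ and $c \in \call(B \rtimes E)$, plus compact remainders; here $k$ is produced by the cycle conditions ($[\pi(a), T]$, $\pi(a)(T^2-1)$, $\pi(a)(T-T^*) \in \calk(\cale)$, and the $A$-compactness of $[U_{ss^*}, T]$ and $U_s T U_{s^*} - U_{ss^*}T$), with $[U_e, \pi(a)] = 0$ and passage to adjoints used to put an $A$-element on whichever side of an $A$-compact operator it has to act. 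The compactness of each $\D(k \otimes c)\D$ then follows from the first observation, and compatibility of the resulting cycle is inherited from $\mathsf{F}(\cale)$.

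Finally, $\delta^S$ is additive because $\mathsf{F}$ is additive and $\D$ is block diagonal on direct sums, and it is homotopy invariant because, under the natural identification $B[0,1] \rtimes E \cong (B \rtimes E)[0,1]$, the same construction applied to an $S$-equivariant $A, B[0,1]$-cycle yields a compatible $S$-equivariant $A \rtimes E, (B \rtimes E)[0,1]$-cycle whose endpoint evaluations are exactly the $\delta^S$-images of the two ends of the homotopy. Hence $\delta^S$ is a well-defined group homomorphism. I expect the real obstacle to be the compactness bookkeeping of the third paragraph — pinning down precisely where the inclusion $\calk(\cale) \otimes 1 \subseteq \calk(\cale \otimes_B (B \rtimes E))$, the $A \rtimes E$-compactness of $[\D, T \otimes 1]$, and the identity $[U_e, \pi(a)] = 0$ enter — rather than anything conceptually deep.
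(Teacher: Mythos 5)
Your proposal is correct, and its core idea coincides with the paper's: realize the target bimodule as $\mathsf{F}(\cale)=\D(\cale\otimes_B(B\rtimes E))$ via Lemma \ref{lemmafunctorC} and obtain the operator by compressing $T\otimes 1$ with the projection $\D$, which commutes with the $S$-action (Lemma \ref{lemmaDcommutes}) and with $\overline\pi$. The difference is in how the cycle conditions are certified. The paper outsources the uncompressed statement --- that $(\overline\pi,\cale\otimes_B(B\rtimes E),T\otimes 1)$ is already a cycle in $KK^S(A\rtimes E,B\rtimes E)$ --- to the descent homomorphism $\epsilon^S$ of \cite[Theorem 6.7.(a)]{burgiKKrdiscrete}, and then argues that cutting down by $\D$ preserves cycles, writing out only the sample computation for the condition $\overline U_s\,\D(T\otimes1)\D\,\overline U_{s^*}-\overline U_{ss^*}\D(T\otimes1)\D$, where $\D$ happens to commute cleanly. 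You instead verify everything from scratch, and in doing so you make explicit the two ingredients the paper leaves implicit: the inclusion $\calk(\cale)\otimes 1\subseteq\calk(\cale\otimes_B(B\rtimes E))$ (valid because $B$ acts on $B\rtimes E$ by left multiplication by $b\rtimes 1$, hence by compacts), and the $A\rtimes E$-compactness of $[\D,T\otimes1]=\sum_P[P,T]\otimes P$ coming from the $A$-compactness of each $[U_e,T]=[U_{ee^*},T]$ together with $[U_e,\pi(a)]=0$; this commutator control is genuinely needed, since compressing a Kasparov operator by a projection that does not commute with it is not automatically harmless, so your third paragraph is if anything more complete than the published argument. One small point of care: an expression $k\otimes c$ with arbitrary $c\in\call(B\rtimes E)$ is not a priori well defined on the balanced tensor product; the operators $c$ that actually occur are products of $\beta_e$'s and $P$'s, for which $1\otimes c$ is well defined (as the paper notes for $1\otimes V_e$), so your argument goes through, but you should phrase it that way. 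Your treatment of additivity and of homotopy invariance (via $B[0,1]\rtimes E\cong(B\rtimes E)[0,1]$, using finiteness of $E$, and compatibility of evaluation with the construction) is also fine; the paper simply declares this part straightforward and omits it.
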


\begin{proof}
There is a homomorphism $\epsilon:KK^S(A,B) \longrightarrow KK^S(A \rtimes E,B \rtimes E)$ defined
by $\epsilon^S(\pi, \cale,T) = (\overline \pi, \cale \otimes_B (B\rtimes E),T\otimes 1)$ on cycles
by \cite[Theorem 6.7.(a)]{burgiKKrdiscrete} and the remark after that theorem.
The action $\overline \pi$ of $A \rtimes E$ on $\cale \otimes_B (B\rtimes E)$ commutes with $\D$
(see the proof of Lemma \ref{lemmafunctorC}).
The cycle $\delta^S(\cale,T)$ is then just the cycle $\epsilon^S(\pi,\cale,T)$ cut down by the projection $\D$,
so is a cycle again. More precisely, for example, to check Definition \ref{defKK}, one has (with Lemma \ref{lemmaDcommutes}
and Definition \ref{SequivariantRep})
\begin{eqnarray*}
&& (a \rtimes e) \big ((u_s \otimes u_s) \D (T \otimes 1) \D (u_{s^*} \otimes u_{s^*}) - (u_{s s^*} \otimes u_{s s^*}) \D (T \otimes 1) \D \big )\\
&=& \D (u_e \otimes u_e)(1 \otimes u_{s s^*}) \big( a (u_s T  u_{s^*} - u_{s s^*} T) \otimes 1 \big ) \D \\
&=& \D (u_e \otimes u_e)(1 \otimes u_{s s^*})( k \otimes 1) \D,
\end{eqnarray*}
and this is a compact operator on $\D (\cale \otimes_B (B \rtimes E)) \D$ because $k := a (u_s T  u_{s^*} - u_{s s^*} T)$
is in $\calk(\cale)$ by Definition \ref{defKK}.
We omit the straightforward proof that $\delta^S$ respects homotopy.
%
%It remains to show that $\delta$ respects homotopy.
%Let $(\cale,T) \in \E^S(A,B \otimes C[0,1])$ be a homotopy. Identifying $(B \rtimes E) \otimes C[0,1]$
%with $(B \otimes C[0,1]) \rtimes E$, and noting that the evaluation map $(B \rtimes E) \otimes C[0,1]
%\rightarrow B \rtimes E$ at time $t$ respects the $S$-action, we get
%\begin{eqnarray*}
%\mathsf{F}'(\cale)_t &=&
%\D(\cale \otimes_{B[0,1]} (B[0,1] \rtimes E)) \otimes_{(B
%\rtimes E)[0,1]} (B \rtimes
%E) \\
%&\cong&  \D (\cale \otimes_{B[0,1]} (B \rtimes E)) \\
%&\cong& \D((\cale \otimes_{B[0,1]} B) \otimes_B (B \rtimes E)) = \mathsf{F}'(\cale_t).
%\end{eqnarray*}
%
%Recall from the proof of \ref{propositionequivalenceCD} that $f$ intertwines the $S$-action. Moreover,
%$f(a \xi)= a\rtimes 1 f(\xi)$, and consequently $f^{-1}(a \rtimes 1 \xi) = a f^{-1}(\xi)$ for $a \in A$.
%
%\begin{eqnarray*}
%&&f \theta_{\xi,\eta} f^{-1} \D u_e(\nu \otimes (1_B \rtimes 1)) = f \theta_{\xi,\eta} f^{-1} \D (u_e(\nu) \otimes (1_B \rtimes 1))
%= f \theta_{\xi,\eta} u_e(\nu) \\
%&& \D (\xi \langle \eta, u_e(\nu) \rangle \otimes (1_B \otimes 1))
%= \sum_{P u_e \neq 0} P(\xi) P u_e \langle \eta, u_e(\nu) \rangle \otimes P(1_B \otimes 1)
%\end{eqnarray*}
%%There is a homomorphism $\epsilon:KK^S(A,B) \longrightarrow KK^S(A \rtimes E,B \rtimes E)$ defined
%%by $\epsilon(\cale,T) = (\cale \otimes_B (B\rtimes E),T\otimes 1)$ on cycles.
%%Since the action of $A \rtimes E$ on $\cale \otimes_B (B\rtimes E)$ commutes with $\D$
%%(see the proof of Lemma \ref{lemmafunctorC}), and $T \otimes 1$ commutes with $\D$
%%(because
\end{proof}

%\begin{lemma}
%$\delta^S$ respects functoriality in $A$ and $B$,
%that is, $(f \otimes 1)^* \delta^S = \delta^S f^*$ and $(g \otimes 1)_*
%\delta^S = \delta^S g_*$ for $S$-equivariant homomorphisms $f:A' \rightarrow
%A$ and $g: B \rightarrow B'$.
%\end{lemma}
%
%\begin{proof}
%The proof is straightforward. (One uses \cite[Lemma 6.6]{burgiKKrDiscrete}.)
%\end{proof}

\begin{lemma}  \label{lemmagamma}
There exists a homomorphism
$\gamma^S: \widehat{KK^S}(A \rtimes E, B \rtimes E) \longrightarrow KK^S(A,B)$
defined by
$\gamma^S(\calf,T) = (\mathsf{G}(\calf), T)$	%T \otimes 1)$
%$\delta^S(\cale,T) = (\mathsf{F}(\cale),\mathsf{F}(T))$
on cycles.
\end{lemma}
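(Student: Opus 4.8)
The plan is to verify that $\gamma^S$ is well-defined on cycles and descends to homotopy classes. First I would check that if $(\calf,T)$ is a compatible $S$-equivariant $A\rtimes E, B\rtimes E$-cycle, then $(\mathsf{G}(\calf),T)$ is an $S$-equivariant $A,B$-cycle in the sense of Definition \ref{defKK}. Lemma \ref{lemmafunctorG} already gives that $\mathsf{G}(\calf)$ is an $S$-Hilbert $A,B$-bimodule and that $\call(\calf)\subseteq\call(\cale)$ canonically, so $T\in\call(\mathsf{G}(\calf))$ makes sense. The first point to settle is the Kasparov conditions: $[T,a]$, $(T-T^*)a$ and $(T^2-1)a$ lie in $\calk(\mathsf{G}(\calf))$ for $a\in A$, starting from the fact that $[T,a\rtimes 1]$, $(T-T^*)(a\rtimes 1)$, $(T^2-1)(a\rtimes 1)$ lie in $\calk(\calf)$. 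Here I would argue that $\calk(\calf)\subseteq\calk(\mathsf{G}(\calf))$: a rank-one operator $\theta_{\xi,\eta}$ on $\calf$ sends $\zeta\mapsto\xi\langle\eta,\zeta\rangle_\calf$, and using $\langle\cdot,\cdot\rangle_\calf=\sum_P\sigma_P(\langle P\cdot,P\cdot\rangle_\calf)$-type relations from the proof of Lemma \ref{lemmafunctorG} (together with compatibility $P\zeta\cdot b = \zeta\cdot$(something) and the $*$-homomorphism property of $\sigma_P$), one rewrites $\theta_{\xi,\eta}$ as a finite sum of $\cale$-rank-one operators indexed by $P\in X$; finiteness of $X$ is what makes this work. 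Since $A$ and $S$ act on $\mathsf{G}(\calf)$ by the same operators as on $\calf$, and $\mathsf{G}$ leaves the $S$-action literally unchanged, the equivariance conditions $[U_{ss^*},T]$ and $U_s T U_{s^*}-U_{ss^*}T$ lying in $\{S\in\call(\cale)\mid aS,Sa\in\calk(\cale)\}$ follow from the corresponding statements on $\calf$ combined with $\calk(\calf)\subseteq\calk(\cale)$ and the fact that $a\in A$ acts as $(a\rtimes 1)\in A\rtimes E$.

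Next I would check functoriality-compatibility of the assignment on morphisms, but since $\gamma^S$ is only claimed on cycles this reduces to checking homotopy invariance. Given a compatible $S$-equivariant $A\rtimes E, B\rtimes E [0,1]$-cycle $(\calf,T)$ realizing a homotopy, I would apply $\mathsf{G}$ pointwise. The key observation is that $\mathsf{G}$ is compatible with the $[0,1]$-parametrization: one has a natural identification $\mathsf{G}(\calf)\cong$ a field over $[0,1]$ whose fibre at $t$ is $\mathsf{G}(\calf_t)$, because the inner-product formula (\ref{innerprodEqu}) and the module actions $\xi\cdot b=\xi(b\rtimes 1)$, $a\cdot\xi=(a\rtimes 1)\xi$ are defined fibrewise and the evaluation maps $B[0,1]\rtimes E\to B\rtimes E$ intertwine the $\sigma_P$'s. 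Hence a compatible homotopy in $\mathsf{D}$ maps under $\mathsf{G}$ to a homotopy in $\mathsf{C}$ in the sense of Definition \ref{defKK}, and $\gamma^S$ is well-defined on $\widehat{KK^S}$-classes. Additivity under direct sums is immediate since $\mathsf{G}(\calf_1\oplus\calf_2)=\mathsf{G}(\calf_1)\oplus\mathsf{G}(\calf_2)$ (the formula (\ref{innerprodEqu}) and $\sigma_P$ respect direct sums), so $\gamma^S$ is a group homomorphism.

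The main obstacle I expect is the inclusion $\calk(\calf)\subseteq\calk(\mathsf{G}(\calf))$ and, dually, making sure that a $T$ which is "locally compact" modulo $A\rtimes E$ on $\calf$ becomes locally compact modulo $A$ on $\cale$ — the subtlety being that $A$ acts on $\cale$ only through $A\rtimes 1\subseteq A\rtimes E$, so one must confirm that $(a\rtimes 1)T$ compact in $\call(\calf)$ is enough, rather than needing the full $A\rtimes E$-compactness; this is fine precisely because Definition \ref{defKK} only requires compactness against $A$, and $a\mapsto a\rtimes 1$ is the representation of $A$ on $\mathsf{G}(\calf)$. Once the identifications $\call(\calf)\subseteq\call(\cale)$ and $\calk(\calf)\subseteq\calk(\cale)$ are in hand — the latter via the finite sum over $P\in X$ using that $\sigma_P$ is a $*$-homomorphism — everything else is a routine transcription of the cycle axioms, so I would present those checks tersely and concentrate the write-up on the compact-operator inclusion.
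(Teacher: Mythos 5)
Your proposal follows essentially the same route as the paper: the key compactness inclusion $\calk(\calf)\subseteq\calk(\mathsf{G}(\calf))$ is obtained exactly as in the paper by expanding $\theta_{\xi,\eta}$ into a finite sum over $P\in X$ of elementary operators of $\mathsf{G}(\calf)$ (using that $\sigma_P$ is a $*$-homomorphism and $X$ is finite), and homotopy invariance is handled by identifying the fibre of $\mathsf{G}(\calf)$ at $t$ with $\mathsf{G}(\calf_t)$, which is precisely the isomorphism $\omega(\xi\otimes b)=\xi\otimes(b\rtimes 1)$ that the paper constructs and verifies (surjectivity, $S$-equivariance, inner products) via the uniqueness statement of Lemma \ref{lemmauniquexp}. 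The only difference is one of detail: you assert the fibrewise identification where the paper proves it, but the mechanism you name (the evaluation maps intertwining the $\sigma_P$'s) is the correct one.
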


\begin{proof}
%Let $T \in \call(\calf)$.
%Let $(\calf,T) \in \E^S(A \rtime E, B \rtimes E)$.
By the last assertion of Lemma \ref{lemmafunctorG}, the identical map $\iota: T \mapsto T$ sends $\call(\calf)$ into $\call(\mathsf{G}(\calf))$.
%
%, $T$ is also a selfadjoint operator on $\mathsf{G}(\calf)$.
%Note that $\gamma^S$ is well-defined
%
%Note that we have an identical map $\call(\cale) \rightarrow \call(\mathsf{G}(\cale))$ by Lemma \ref{lemmafunctorG}.
%We need to show that the identical map $\call(\cale) \rightarrow \call(\mathsf{G}(\cale))$ (see Lemma \ref{lemmafunctorG})
%maps compact operators to compact operators.
%Note that $T$ is an adjointable operator on $\mathsf{G}(\cale)$ if $T$ is an adjointable operator on $\cale$ by Lemma \ref{}.
%We need to show that $T$ is a compact operator a compact operator $T$. % on $\mathsf{G}(\calf)$.
%We aim to show that $\gamma^S$ maps a compact operator $T$ (within a cycle) to a compact operator $T$.
We aim to show that $\iota$ maps compact operators to compact operators.
Let $\theta_{\xi,\eta} \in \call(\calf)$ denote the elementary
compact operator $\zeta \mapsto \xi \langle \eta, \zeta \rangle_\calf$. Set $\cale = \mathsf{G}(\calf)$. We have
\begin{eqnarray*}
\xi \langle \eta, \nu \rangle_\calf  &=&  \sum_{P \in X} P(\xi) \langle P  \eta, \nu
\rangle_\calf = \sum_{P \in X} P(\xi) \langle P \eta, \nu \rangle_\cale P(1_B \rtimes 1) \\
&=& \sum_{P \in X} P(\xi) (\langle P \eta, \nu \rangle_\cale \rtimes 1)
= \sum_{P \in X} P(\xi) \cdot \langle P \eta, \nu \rangle_\cale,
\end{eqnarray*}
which is a compact operator in $\call(\cale)$, where $\cdot$ denotes the $B$-module multiplication in $\cale$.
It is not difficult to check that $\gamma^S(\calf,T)$ is a cycle.

It remains to check that $\gamma^S$ respects homotopy.
Let
$(\calf,T)$ be a homotopy,
so an $A \rtimes E, (B \rtimes E)[0,1]$-Kasparov cycle,
%
% in $\widehat{\E^S}(A \rtimes E, (B \rtimes E)[0,1])$,
and denote
$(\cale,T) := \gamma^S(\calf,T)$. (Recall that $\cale$ is an identical copy of $\calf$ as a set.)
Write
$\varphi_t: (B \rtimes E)[0,1] \rightarrow B \rtimes E$ and $\psi_t:
B[0,1] \rightarrow B$ for the evaluation maps at time $t$, and denote by
$\calf_t = \calf \otimes_{\varphi_t} (B \rtimes E)$ and $\cale_t =
\cale \otimes_{\psi_t} B$ evaluation of $\calf$ and $\cale$ at time $t$.
We aim to show that $\cale$ is a homotopy connecting $(\overline \cale_0,T \otimes 1)$ with $(\overline \cale_1,T \otimes 1)$,
where we denote $(\overline \cale_t,T \otimes 1) := \gamma^S(\calf_t,T \otimes 1)$.
To this end it is enough to show that $\omega: \cale_t \rightarrow \overline \cale_t$ with
$\omega(\xi \otimes b) = \xi
\otimes (b \rtimes 1)$ is an isomorphism of $S$-Hilbert $A,B$-bimodules, because $(\cale_0,T \otimes 1)$ and $(\cale_1,T \otimes 1)$ are homotopically connected by
$(\cale,T)$.
That $\omega$ respects the $A,B$-bimodule structure is obvious.
%Note that $\varphi_t$ and $\psi_t$ are $S$-equivariant homomorphisms.
Note that $\calf$ is a compatible bimodule and $\varphi$ is a compatible representation. Thus
\begin{equation}  \label{blanacedft}
\xi \otimes b \rtimes e = \xi \otimes u_e(b \rtimes 1) = \xi u_e(1_{(B \rtimes E)[0,1]}) \otimes (b \rtimes 1)
= u_e(\xi) \otimes (b \rtimes 1)
\end{equation}
%in $\calf_t$, and so also
in $\overline \cale_t$, which shows that $\omega$ is surjective.
Similarly, we see that $\omega$ is $S$-equivariant as
$$\omega(u_s(\xi) \otimes s(b)) = u_{s s^*}u_s(\xi)
% 1_{(B \rtimes E)[0,1]})
\otimes (s(b) \rtimes 1) = u_s(\xi) \otimes s(b \rtimes 1).$$

%That $\omega$ respects the $A,B$-bimodule structure is obvious. Since $\varphi_t$ and $\psi_t$ are $S$-equivariant homomorphisms.
%we have $u_e \otimes u_e = u_e \otimes 1$ ($e \in E$) both on $\cale_t$ and $\overline \cale_t$. This shows that $\omega$ is $S$-equivariant, as
%Also $\omega$ is surjective, because $\xi \otimes u_e(b \rtimes 1) = \xi u_e(1_{(B \rtimes E)[0,1]}) \otimes (b \rtimes 1) = u_e(\xi) \otimes (b %\rtimes 1)$.
%$$\omega(u_s(\xi) \otimes s(b)) = u_{s s^*}(u_s(b) 1_{(B \rtimes E)[0,1]}) \otimes b \rtimes 1 = u_s(\xi) \otimes s(b \rtimes 1).$$

%Identity (\ref{blanacedft}) shows also that
We have $P(\xi \otimes (b \rtimes 1)) = P(\xi) \otimes P(b \rtimes 1)$ in $\calf_t$.
Hence, the inner product of $\overline \cale_t$ is computed from the one of $\calf_t$
by
\begin{equation}  \label{eft1}
\big \langle P(\xi \otimes (b \rtimes 1)), P(\eta \otimes (c \rtimes
1)) \big \rangle_{\overline \cale_t} P(1_B \rtimes 1)  = P(b \rtimes 1)^* \varphi_t \big (
\langle P \xi, P \eta \rangle_\calf \big )  P(c \rtimes 1).
\end{equation}
%for $\xi,\eta \in \calf, b,c \in B$.
By the connection
between the inner products of $\cale$ and $\calf$, we have
$$\psi_t \big ( \langle P \xi, P \eta \rangle_\cale \big ) P(1_B \rtimes 1) =
\varphi_t \big( \langle P \xi, P \eta \rangle_\cale P(1_{B[0,1]} \rtimes 1) \big ) =
\varphi_t \langle P \xi, P \eta\rangle_\calf.$$
Multiplying here from
the left and right with $P(b^* \rtimes 1)$ and $P(c \rtimes 1)$,
respectively, gives
\begin{eqnarray}  \label{eft2}
\big (b^* \psi_t( \langle P \xi, P \eta \rangle_\cale) c \big ) P(1_B \rtimes
1) &=&
P(b^* \rtimes 1) \varphi_t \big (\langle P \xi, P \eta\rangle_\calf \big )
P(c \rtimes 1).
\end{eqnarray}
By a similar computation as in (\ref{blanacedft}) we get $u_e \otimes u_e = u_e \otimes 1$ on $\cale_t$, and so also
$P \otimes P = P \otimes 1$ on $\cale_t$.
Consequently, the left hand side of (\ref{eft2}) equals $\langle P(\xi \otimes b), P(\eta \otimes c) \rangle_{\cale_t} P(1_B \rtimes 1)$.
A compare of the identities (\ref{eft1}) and (\ref{eft2}) shows that
\begin{equation} \label{eft3}
\langle P \omega(\xi \otimes b), P \omega(\eta \otimes c)\rangle_{\overline \cale_t} P(1_B \rtimes 1)
= \langle P(\xi \otimes b), P(\eta \otimes c) \rangle_{\cale_t} P(1_B \rtimes 1).
\end{equation}
%By (\ref{blanacedft}) we may chose $b$ and $c$ in $\rho_P(B)$, and of course $\langle P \xi, P \eta \rangle_\cale$ is in $\rho_P(B)$,
%so the $B$-factor in (\ref{eft2}), and so the one of (\ref{eft3}), is in $\rho_P(B)$.
By the uniqueness of the $\rho_P(B)$-factor (Lemma \ref{lemmauniquexp}), we see that
$\omega$ respects the inner product.
\end{proof}

%We are now in the position to get the main results as a corollary of our considerations.

%We get now the following corollaries.

\begin{theorem}   \label{theoremdelta}
$\delta^S$ and $\gamma^S$ are isomorphisms which are inverses to each other.
\end{theorem}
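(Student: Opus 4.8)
The plan is to verify that $\gamma^S$ and $\delta^S$ are mutually inverse at the level of $KK$-classes; since both are homomorphisms (Lemmas~\ref{lemmadelta} and~\ref{lemmagamma}), this is all that is required. Straight from the definitions, $\gamma^S\delta^S(\cale,T)=(\mathsf{G}\mathsf{F}(\cale),\D(T\otimes 1)\D)$ and $\delta^S\gamma^S(\calf,T)=(\mathsf{F}\mathsf{G}(\calf),\D(T\otimes 1)\D)$ on cycles, and Proposition~\ref{propositionequivalenceCD} already provides isomorphisms of $S$-Hilbert bimodules $\kappa_\cale\colon\cale\to\mathsf{G}\mathsf{F}(\cale)$ and $\lambda_\calf\colon\calf\to\mathsf{F}\mathsf{G}(\calf)$, both induced by the linear map $f(\xi)=\D(\xi\otimes(1_B\rtimes 1))$ of (\ref{mapf}). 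Thus the remaining task is purely to compare operators: to show that these bimodule isomorphisms carry $T$ to $\D(T\otimes 1)\D$, possibly only after a homotopy.

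I would treat $\delta^S\circ\gamma^S=\id$ first, as it is the cleaner half. If $(\calf,T)$ is a compatible cycle, then compatibility on the $B\rtimes E$-side together with unitality forces each $U_e$ to be right multiplication by the projection $e(1_B\rtimes 1)$, so the adjointable, hence $(B\rtimes E)$-linear, operator $T$ commutes with every $U_e$ and therefore with every $P\in X$. Consequently $f(T\xi)=\D(T\xi\otimes(1_B\rtimes 1))=\sum_{P\in X}PT\xi\otimes P(1_B\rtimes 1)=\sum_{P\in X}TP\xi\otimes P(1_B\rtimes 1)=(T\otimes 1)f(\xi)$, and since $f(\xi)$ and $(T\otimes 1)f(\xi)$ both lie in $\D(\mathsf{G}(\calf)\otimes_B(B\rtimes E))$ this equals $\D(T\otimes 1)\D f(\xi)$. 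Hence $\lambda_\calf$ is an isomorphism of cycles, and $\delta^S\gamma^S(\calf,T)=(\calf,T)$ in $\widehat{KK^S}(A\rtimes E,B\rtimes E)$.

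For $\gamma^S\circ\delta^S=\id$ the obstruction is that, on a general incompatible cycle $(\cale,T)$, the operator $T$ need not commute with the projections $U_e$. The remedy is to replace $T$ by its block-diagonal part $T':=\sum_{P\in X}PTP$. Since $[U_{ss^*},\pi(a)]=0$ holds exactly (Definition~\ref{SequivariantRep}), while $[U_{ss^*},T]$ lies in the ideal $\calj:=\{S\in\call(\cale)\mid aS,\,Sa\in\calk(\cale)\text{ for all }a\in A\}$ (Definition~\ref{defKK}), and since $E$, hence $X$, is finite, every $[P,T]$ lies in $\calj$ and therefore $T-T'=\sum_{P\neq Q}PTQ=\sum_{P\neq Q}[P,T]Q$ lies in $\calj$; a routine check then shows that $(\cale,T')$ is again an $S$-equivariant Kasparov cycle and that the straight-line path $t\mapsto T+t(T'-T)$ is a homotopy from $(\cale,T)$ to $(\cale,T')$. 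As $\delta^S$ respects homotopy (Lemma~\ref{lemmadelta}), $\gamma^S\delta^S(\cale,T)=\gamma^S\delta^S(\cale,T')$, and since $T'$ now commutes with every $P\in X$, the same computation as in the previous paragraph (with $\kappa_\cale$ in place of $\lambda_\calf$) shows that $\kappa_\cale$ carries $T'$ to $\D(T'\otimes 1)\D$; hence $\gamma^S\delta^S(\cale,T')=(\cale,T')=(\cale,T)$ in $KK^S(A,B)$.

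Putting the two halves together, $\gamma^S\circ\delta^S$ and $\delta^S\circ\gamma^S$ are the respective identity maps, so $\delta^S$ and $\gamma^S$ are mutually inverse isomorphisms. I expect the only genuine difficulty to be the reduction $T\rightsquigarrow T'$ in the $\gamma^S\circ\delta^S$ direction: one has to verify that block-diagonalization alters $T$ only by an element of $\calj$ and still delivers a bona fide $S$-equivariant Kasparov cycle, and it is precisely here that the finiteness of $E$ is used. Everything else is bookkeeping with the map $f$ of Proposition~\ref{propositionequivalenceCD} and the automatic commutation of adjointable operators with the $U_e$ on compatible modules.
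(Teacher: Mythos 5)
Your proposal is correct and follows essentially the same route as the paper: both reduce the statement to the bimodule isomorphisms $\kappa_\cale$ and $\lambda_\calf$ of Proposition~\ref{propositionequivalenceCD}, together with the observation that transporting $\D(T\otimes 1)\D$ back to $\cale$ produces the block compression $\sum_{P\in X}PTP$, which differs from $T$ only by an element of $\{S\in\call(\cale)\mid aS,\,Sa\in\calk(\cale)\}$ because of the cycle conditions. Your packaging --- first homotoping $T$ to $\sum_{P\in X}PTP$ and then intertwining exactly via $\kappa_\cale$, and noting that on compatible modules $T$ commutes with every $P$ so that $\lambda_\calf$ intertwines on the nose --- is just a reorganization of the paper's computation of $a\kappa^{-1}\D(T\otimes 1)\D\kappa$ and its ``similarly'' for the other composite.
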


\begin{proof}
By Lemmas \ref{lemmadelta} and \ref{lemmagamma} we have $\gamma^S \delta^S(\cale,T) = (\mathsf{G} \mathsf{F}(\cale),\D (T \otimes 1)\D)$.
By the proof of Proposition \ref{propositionequivalenceCD} there is an isomorphism $\kappa: \cale \rightarrow \mathsf{G} \mathsf{F}(\cale)$
of $S$-Hilbert $A,B$-bimodules, where $\kappa = W \circ f$.
%Now $\kappa^{-1} (\D(T \otimes 1) \D) \kappa = T$, and so this proves that
%$\gamma^S \delta_S = id$.
Since $(\cale,T)$ is a cycle, $k:= a T P - a P T$ is a compact operators on $\cale$ for $a \in A$ and $P \in X$.
Consequently,
%\begin{eqnarray*}
%&&  a \kappa^{-1} \D (T \otimes 1) \D \kappa(\bullet)  =  \kappa^{-1} \sum_{P \in X}a  P TP(\bullet) \otimes P(1) \\
%&\equiv& \kappa^{-1} \sum_{P \in X}  a P T(\bullet) \otimes P(1)
%= \kappa^{-1} \D( a  T(\bullet) \otimes 1)
%= a T(\bullet) %\\
%\end{eqnarray*}
\begin{eqnarray*}
&&  a \kappa^{-1} \D (T \otimes 1) \D \kappa(\xi)  =  \kappa^{-1} \sum_{P \in X}a  P TP(\xi) \otimes P(1) \\
&=& \kappa^{-1} \sum_{P \in X}  P(a  T(\xi) + k(\xi)) \otimes P(1)
%= \kappa^{-1} \D( a  T(\bullet) \otimes 1)
= a T(\xi) + k(\xi). %\\
\end{eqnarray*}
%modulo compact operators.
This shows that $\kappa^{-1} \D(T \otimes 1)\D \kappa$ is a compact perturbation of $T$, and hence $\gamma^S \delta^S = \id$.
Similarly, one checks $\delta^S \gamma^S = \id$. % is checked similarly.
%
%Similarly
%we have $\delta^S \gamma^S(\cale,T) = (\mathsf{F} \mathsf{G}(\cale),\D (T \otimes 1)\D)$ and
%an isomorphism  $\lambda:\calf \longrightarrow \mathsf{F} \mathsf{G}(\calf)$
%of $S$-Hilbert $A \rtimes E,B \rtimes E$-bimodules, where $\lambda = f \circ W$.
\end{proof}

%\section{The Green--Julg isomorphism}
%\label{section6}

%We are now in the position to deduce the Green--Julg isomorphism for inverse semigroups.
%Suppose that $S$ is finite and unital.
\begin{theorem}   \label{corollaryGreen}
Let $S$ be a finite unital inverse semigroup. Then there exists a
Green--Julg isomorphism $\mu^S$
determined by the commutative diagram
$$\begin{xy}
\xymatrix{ KK^S(\C,A) \ar[r]^{\delta^S} \ar[drr]_{\mu^S}  &
\widehat{KK^S}(C_0(X), A
\rtimes E)  \ar[r]^{\widehat{\mu^S}} &  K( (A \rtimes E) \widehat \rtimes S) \ar[d]^{\gamma_*} \\
&&  K(A \rtimes S)   . }
% \ar@{-}@<1pt>[d] \\
% &&  K(A \rtimes S) \ar@{-}@<1pt>[u]  }
\end{xy}$$
\end{theorem}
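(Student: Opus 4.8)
The plan is to \emph{define} $\mu^S := \gamma_* \circ \widehat{\mu^S} \circ \delta^S$, so that the displayed triangle commutes by construction, and then to check that each of the three arrows in this composite is an isomorphism; the assertion then follows at once.

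That $\delta^S$ is an isomorphism is Theorem \ref{theoremdelta}, applied with the first of the two coefficient algebras taken to be $\C$: using the identification $\C \rtimes E = C^*(E) = C_0(X)$ from Section \ref{section3}, it gives that $\delta^S : KK^S(\C, A) \to \widehat{KK^S}(C_0(X), A \rtimes E)$ is invertible, with inverse $\gamma^S$. Next, $\widehat{\mu^S}$ is the assembly map in compatible $S$-equivariant $KK$-theory built in \cite{burgiKKrdiscrete}; I would recall its bijectivity as follows. Since $E$ is finite, the groupoid $\calg$ attached to $S$ (with finite unit space $X$) is a finite, hence proper, groupoid. The translation of \cite{burgiKKrdiscrete} carries $\widehat{KK^S}(C_0(X), A \rtimes E)$ onto the relevant $\calg$-equivariant $KK$-group and Sieben's crossed product $(A \rtimes E) \widehat\rtimes S$ onto the corresponding groupoid crossed product, and under these identifications $\widehat{\mu^S}$ becomes Tu's Baum--Connes (Green--Julg) map \cite{0939.19001,0932.19005} for $\calg$, which is an isomorphism because $\calg$ is proper (for a finite groupoid full and reduced crossed products coincide and the assembly map is an isomorphism).

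Finally, $\gamma : (A \rtimes E) \widehat\rtimes S \to A \rtimes S$ is the canonical $*$-isomorphism identifying Sieben's crossed product of the $E$-crossed product $A \rtimes E$, carrying the $S$-action $\beta_s(a \rtimes e) = s(a) \rtimes s e s^*$ of Section \ref{section4}, with the Khoshkam--Skandalis crossed product $A \rtimes S$ of $A$; concretely it is the continuous extension of $(a \rtimes e) \rtimes s \mapsto a \rtimes s$ from the dense $*$-subalgebra of elementary tensors. The essential point is that the Khoshkam--Skandalis construction already incorporates the freely adjoined commuting idempotents $u_e$ into its coefficient algebra --- that is, into $A \rtimes E$ --- whereas Sieben's construction does not, so that iterating Sieben over $S$ after first crossing $A$ with $E$ reproduces $A \rtimes S$. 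This comparison is essentially contained in \cite{1061.46047}; granting it, $\gamma_*$ is an isomorphism of $K$-groups. Hence $\mu^S$ is a composite of three isomorphisms, so an isomorphism, and it fits into the commutative triangle by definition.

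I expect the main obstacle to be the middle step: verifying that the categorical translation of \cite{burgiKKrdiscrete} really sends $(A \rtimes E) \widehat\rtimes S$ to the groupoid crossed product of $\calg$ and intertwines the two assembly maps, so that Tu's isomorphism theorem for proper groupoids applies verbatim. A secondary but still necessary point is to confirm that $\gamma$ is a genuine $S$-equivariant $C^*$-algebra isomorphism compatible with all the constructions, rather than merely a $KK$-equivalence; everything else is formal composition of known isomorphisms.
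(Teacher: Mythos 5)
Your proposal follows essentially the same route as the paper: define $\mu^S$ as the composite $\gamma_*\circ\widehat{\mu^S}\circ\delta^S$, invoke Theorem \ref{theoremdelta} for $\delta^S$, obtain $\widehat{\mu^S}$ by translating Tu's Baum--Connes isomorphism for the finite (hence proper) Paterson groupoid via the $\widehat{KK^S}$-to-groupoid-$KK$ identification of \cite{burgiKKrdiscrete} together with the Quigg--Sieben isomorphism $B\widehat\rtimes S\cong B\rtimes\calg_S$, and take $\gamma$ from Khoshkam--Skandalis \cite[Theorem 6.2]{1061.46047}. The points you flag as potential obstacles are exactly the ingredients the paper disposes of by citation, so the argument is correct and matches the paper's proof.
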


\begin{proof}
Since $S$ is finite,
Paterson's groupoid $\calg_S$ \cite{0913.22001} associated to $S$ is finite and Hausdorff,
and
we may
choose $X= \calg^{(0)}$ as an example for an universal space for proper actions by $\calg_S$.
In this simple case, the Baum--Connes map for groupoids by Tu \cite{0932.19005} becomes an isomorphism
$$\mu^{\calg_S}:KK^{\calg_S}(C_0(X),A) \longrightarrow K(A \rtimes \calg_S).$$

In \cite{burgiKKrdiscrete} we have shown that $\widehat{KK^S}(A,B)$ and $KK^{\calg_S}(A,B)$ are isomorphic, and use
this, together with an isomorphism $B \widehat \rtimes S \cong B \rtimes \calg_S$ for $S$-algebras $B$ by Quigg and Sieben \cite{0992.46051},
to translate the last Baum--Connes isomorphism for groupoids to a Baum--Connes isomorphism
%%, and the Baum--Connes
%%map for the groupoid $\calg_S$ translates to a Baum--Connes map for the inverse semigroup $S$. More precisely, we may
%Since $\calg_S$ is finite, we may
%choose $X= \calg^{(0)}$ as an example space for a space of universal proper actions by $\calg_S$. Then
%Tu's Baum--Connes map for $\calg_S$
$\widehat{\mu^{S}}$ as in the above diagram.
%; confer the Baum--Connes map for inverse semigroups in \cite[Corollary 5.4]{burgiKKrdiscrete}.
%:\widehat{KK^{S}}(C_0(X),A) \longrightarrow K(A \widehat \rtimes S)$.
%, which appears in the diagram of the theorem.
%, which is an isomorphism as $\calg_S$ is finite,
%translates to a Baum--Connes isomorphism $\widehat{\mu^S}:\widehat{KK^S}(C_0(X),A) \longrightarrow K(A \widehat \rtimes S)$.
%
The down arrow in the diagram is induced by
an isomorphism $\gamma:(A \rtimes E) \widehat \rtimes S \longrightarrow A \rtimes S$
by Khoshkam and Skandalis in \cite[Theorem 6.2]{1061.46047}.
%By what we said above, $\widehat{\mu^S}$ and $\gamma_*$ are isomorphisms, and
%Identifying $C_0(X)$ with $\C \rtimes E$,
The map $\delta^S$ of the diagram is the map of Lemma \ref{lemmadelta}, which is an isomorphism by
Theorem \ref{theoremdelta}.
%, thereby noticing that $C_0(X) \cong \C \rtimes E$.
\end{proof}

\bibliographystyle{plain}
\bibliography{references}

\begin{thebibliography}{10}

\bibitem{burgiDescent}
B.~Burgstaller.
\newblock {A descent homomorphism for semimultiplicative sets}.
\newblock {\em {Rocky Mountain J. Math, to appear.}}
\newblock {preprint arXiv:1111.4160}.

\bibitem{burgiKKrdiscrete}
B.~Burgstaller.
\newblock {Equivariant $KK$-theory of $r$-discrete groupoids and inverse
  semigroups}.
\newblock {\em {Rocky Mountain J. Math, to appear.}}
\newblock {preprint arXiv:1211.5006}.

\bibitem{burgiSemimultiKK}
B.~Burgstaller.
\newblock {Equivariant $KK$-theory for semimultiplicative sets}.
\newblock {\em New York J. Math.}, 15:505--531, 2009.

\bibitem{0461.46044}
P.~Julg.
\newblock {K-th\'eorie \'equivariante et produits croises.}
\newblock {\em C. R. Acad. Sci., Paris, S\'er. I}, 292:629--632, 1981.

\bibitem{kasparov1981}
G.G. Kasparov.
\newblock {The operator K-functor and extensions of C*-algebras.}
\newblock {\em Math. USSR, Izv.}, 16:513--572, 1981.

\bibitem{kasparov1988}
G.G. Kasparov.
\newblock {Equivariant KK-theory and the Novikov conjecture.}
\newblock {\em Invent. Math.}, 91(1):147--201, 1988.

\bibitem{1061.46047}
M.~Khoshkam and G.~Skandalis.
\newblock {Crossed products of $C^*$-algebras by groupoids and inverse
  semigroups.}
\newblock {\em J. Oper. Theory}, 51(2):255--279, 2004.

\bibitem{0913.22001}
A.L.T. Paterson.
\newblock {\em {Groupoids, inverse semigroups, and their operator algebras.}}
\newblock {Progress in Mathematics (Boston, Mass.). 170. Boston, MA:
  Birkh\"auser.}, 1999.

\bibitem{0992.46051}
J.~Quigg and N.~Sieben.
\newblock {$C^*$-actions of $r$-discrete groupoids and inverse semigroups.}
\newblock {\em J. Aust. Math. Soc., Ser. A}, 66(2):143--167, 1999.

\bibitem{sieben1997}
N.~Sieben.
\newblock {$C^*$-crossed products by partial actions and actions of inverse
  semigroups.}
\newblock {\em J. Aust. Math. Soc., Ser. A}, 63(1):32--46, 1997.

\bibitem{0932.19005}
J.~L. Tu.
\newblock {The Novikov conjecture for hyperbolic foliations. (La conjecture de
  Novikov pour les feuilletages hyperboliques.)}.
\newblock {\em $K$-Theory}, 16(2):129--184, 1999.

\bibitem{0939.19001}
J.L. Tu.
\newblock {The Baum-Connes conjecture for amenable foliations. (La conjecture
  de Baum-Connes pour les feuilletages moyennables.)}.
\newblock {\em K-Theory}, 17(3):215--264, 1999.

\bibitem{ThesisRoland}
R.~Vergnioux.
\newblock {\em KK-th\'eorie \'equivariante et op\'erateur de Julg-Valette pour
  les groupes quantiques}.
\newblock PhD thesis, Paris, 2002.

\bibitem{1064.46064}
R.~Vergnioux.
\newblock {$K$-amenability for amalgamated free products of amenable discrete
  quantum groups.}
\newblock {\em J. Funct. Anal.}, 212(1):206--221, 2004.

\end{thebibliography}

\end{document}